\newcommand*{\mailto}[1]{\href{mailto:#1}{\nolinkurl{#1}}}
\newcommand{\R}{\mathbb R}
\newcommand{\norm}[1]{\left\Vert#1\right\Vert}
\newcommand{\nn}{\nonumber}
\newtheorem{theorem}{Theorem}[section]
\newtheorem{corollary}[theorem]{Corollary}
\newtheorem{lemma}[theorem]{Lemma}
\newtheorem{definition}[theorem]{Definition}
\numberwithin{equation}{section}
\def\bel{\begin{equation}\label}
\def\eeq{\end{equation}}
\def\ds{\displaystyle}
\def\ov{\overline}
\def\bega{\begin{array}}
\def\enda{\end{array}}
\def\ve{\varepsilon}
\begin{document}

\title[Burgers-Poisson equation]{On the Burgers-Poisson equation}

\author[K. Grunert]{Katrin Grunert}
\address{Department of Mathematical Sciences\\ Norwegian University of Science and Technology\\ NO-7491 Trondheim\\ Norway}
\email{\mailto{katring@math.ntnu.no}}
\urladdr{\url{http://www.math.ntnu.no/~katring/}}

\author[K. T. Nguyen]{Khai T. Nguyen}
\address{Department of Mathematics\\ Penn State University\\ PA 16802\\ USA}
\email{\mailto{ktn2@psu.edu}}
\urladdr{\url{http://www.personal.psu.edu/ktn2/}}

\subjclass[2010]{Primary: 35L03; Secondary: 35L67}
\keywords{Burgers-Poisson equation, existence, uniqueness, blow-up}
\thanks{Research supported by the grant {\it Waves and Nonlinear Phenomena (WaNP)} from the Research Council of Norway}

\begin{abstract}
In this paper, we prove  the existence and uniqueness of weak entropy solutions to the Burgers-Poisson equation for initial data in ${\bf L}^1(\mathbb{R})$. In addition an Oleinik type estimate is established and some criteria on local smoothness and wave breaking for weak entropy solutions are provided.
\end{abstract}

\maketitle

\section{Introduction}
\setcounter{equation}{0}
The Burgers--Poisson equation is given by the balance law obtained from Burgers' equation by adding
a nonlocal source term 
\bel{BP} 
u_t + \left({\frac{u^2}{ 2}}\right)_x=[G*u]_x\,,
\eeq
where
$$G(x)=-{\frac12}e^{-|x|}\quad\text{and}\quad [G*u](x)=-{\frac12}\int^{\infty}_{-\infty}e^{-|x-z|}\cdot u(z)dz$$
solve the Poisson equation
\[
\varphi_{xx}-\varphi~=u\,.
\]
Equation (\ref{BP}) has been derived in \cite{W} as a simplified model of shallow water waves and admits conservation of both momentum and energy. For sufficiently smooth initial data
\bel{id}u(0,x)~=\ov u(x)\,,\eeq
the local existence and uniqueness of solutions of (\ref{BP}) have been established in \cite{FC}. In addition the analysis of traveling waves showed that the equation features
wave breaking in finite time. Hence it is natural to study existence and uniqueness of weak entropy solutions which are global in time.
\begin{definition}
A function $u\in \mathbf{L}_{loc}^{1}([0,\infty[\times\R)\cap {\bf L}^{\infty}_{loc}(]0,\infty[, {\bf L}^{\infty}(\R))$ is a {\bf weak entropy solution} of \eqref{BP}-\eqref{id} if $u$ satisfies the following properties:
\quad\\
\begin{itemize}
\item [(i)] The map $t\mapsto u(t,\cdot)$ is continuous with values in $\mathbf{L}^1(\R)$ and satisfies the initial condition \eqref{id}.
\quad\\
\item[(ii)] For any $k\in\R$ and any non-negative test function $\phi\in C^1_c(]0,\infty[\times\R,\R)$ one has
\bel{entropy}
\ds\int\int \Big[|u-k|\phi_t+\mathrm{sign}(u-k)\Big({u^2\over 2}-{k^2\over 2}\Big)\phi_x+\mathrm{sign}(u-k)[G_x*u(t,\cdot)](x)\phi\Big]~dxdt~\geq~0\,.
\eeq
\end{itemize}
\end{definition}
\quad\\
\noindent
For any initial data $\bar{u}\in BV(\R)$, the existence of a global weak entropy solution to \eqref{BP}-\eqref{id} has been studied in \cite{FC}. The proof is based on the vanishing viscosity method yielding a sequence of approximating smooth solutions. Due to the BV bound of $\bar u$, one obtains that the approximating solutions also satisfy an a priori uniform BV bound for all positive times, yielding the compactness of the approximating sequence of solutions. However this method cannot be applied in the more general case with initial data in ${\bf L}^1(\R)$. In addition there are no uniqueness or continuity results for global weak entropy solutions of \eqref{BP} established in \cite{FC}.
Thus our main goal is to study the existence and uniqueness for global weak entropy solutions for initial data $\bar u\in {\bf L}^1(\R)$. To be more explicit we are going to show the following theorem.
\begin{theorem}\label{thm1}
Given any initial data $u(0,\cdot)=\bar{u}(\cdot)\in \mathbf{L}^1(\R)$, the Cauchy problem \eqref{BP}-\eqref{id} has a unique weak entropy solution $u(t,x)$ in $[0,\infty)\times\R$. Furthermore, for any $t>0$ 
\bel{L-1}
\norm{u(t,\cdot)}_{\mathbf{L}^1(\R)}\leq e^{t} \norm{\bar{u}}_{\mathbf{L}^1(\R)}\,,
\eeq
and
\[
u(t,y)-u(t,x)\leq\Big[\frac1{t}+2+2 t+4te^{t} \norm{\bar{u}}_{\mathbf{L}^1(\R)}\Big] (y-x),\qquad x<y\,. 
\]
Moreover, let $v(t,x)$ be the weak entropy solution of \eqref{BP} with initial data $v(0,\cdot)=\bar{v}(\cdot)\in\mathbf{L}^1(\R)$. Then, for every $t>0$, it holds
\[
\norm{u(t,\cdot)-v(t,\cdot)}_{\mathbf{L}^1(\R)}\leq e^t\norm{\bar{u}-\bar{v}}_{\mathbf{L}^1(\R)}\,.
\]
\end{theorem}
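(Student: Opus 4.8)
The plan is to construct the solution by vanishing viscosity, derive every quantitative bound uniformly for the parabolic approximations, and only afterwards obtain uniqueness and $\mathbf{L}^1$-stability by a Kruzhkov doubling-of-variables argument in which the nonlocal source is treated as a datum that is Lipschitz in $\mathbf{L}^1$. Concretely, for $\epsi>0$ I would study the smooth solutions $u^\epsi$ of $u^\epsi_t+(\,(u^\epsi)^2/2\,)_x=[G_x*u^\epsi]+\epsi u^\epsi_{xx}$ with mollified initial data, for which local existence and the needed regularity are standard, and then pass to the limit $\epsi\to0$. Two properties of the kernel drive everything: $\norm{G_x}_{\mathbf{L}^1(\R)}=1$ and $\norm{G}_{\mathbf{L}^\infty(\R)}=\tfrac12$, together with the identity $G_{xx}=G+\delta_0$, which gives $[G_x*u]_x=u+G*u$.

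For the $\mathbf{L}^1$ bound I would multiply by $\sgn(u^\epsi)$ (equivalently use \eqref{entropy} with $k=0$) and integrate in $x$; the flux and viscous terms drop, leaving $\frac{d}{dt}\norm{u^\epsi(t,\cdot)}_{\mathbf{L}^1(\R)}\le\norm{G_x*u^\epsi}_{\mathbf{L}^1(\R)}\le\norm{u^\epsi(t,\cdot)}_{\mathbf{L}^1(\R)}$, so Gronwall yields \eqref{L-1}. For the one-sided (Oleinik) estimate, set $w=u^\epsi_x$; differentiating the equation produces the parabolic equation $w_t+u^\epsi w_x+w^2=(G_x*u^\epsi)_x+\epsi w_{xx}$, and the maximum principle gives $\frac{d}{dt}\max_x w\le-(\max_x w)^2+\norm{(G_x*u^\epsi)_x}_{\mathbf{L}^\infty(\R)}$. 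The source derivative is estimated by $\norm{u^\epsi+G*u^\epsi}_{\mathbf{L}^\infty(\R)}\le\norm{u^\epsi}_{\mathbf{L}^\infty(\R)}+\tfrac12\norm{u^\epsi}_{\mathbf{L}^1(\R)}$, where crucially $\norm{u^\epsi}_{\mathbf{L}^\infty(\R)}$ is controlled \emph{by the one-sided bound itself}: if $u^\epsi_x\le W(t)$, then comparing against the $\mathbf{L}^1$ mass on both sides of a near-maximum gives $\norm{u^\epsi(t,\cdot)}_{\mathbf{L}^\infty(\R)}\le\sqrt{2\,W(t)\,\norm{u^\epsi(t,\cdot)}_{\mathbf{L}^1(\R)}}$. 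Feeding this back and comparing the Riccati inequality with the supersolution $t\mapsto\frac1t+\int_0^t\norm{(G_x*u^\epsi)_x(s,\cdot)}_{\mathbf{L}^\infty(\R)}\,ds$, whose $1/t$ singularity absorbs any initial value of $w$, produces (after invoking \eqref{L-1}) the stated coefficient $\frac1t+2+2t+4te^t\norm{\bar u}_{\mathbf{L}^1(\R)}$; the same one-sided bound with \eqref{L-1} then yields a uniform $\mathbf{L}^\infty$ bound on every strip $\{t\ge\delta\}$.

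Next I would establish compactness and pass to the limit. The Oleinik estimate furnishes a uniform one-sided Lipschitz bound, hence, with the $\mathbf{L}^1$ and $\mathbf{L}^\infty$ bounds, uniform local $BV$ bounds in $x$ on each strip $[\delta,T]\times\R$, while equicontinuity in $t$ follows from the equation. Helly's theorem and a diagonal argument extract a subsequence converging in $\mathbf{L}^1_{loc}$ to some $u$; the convolution term passes to the limit because $G_x*u^\epsi\to G_x*u$ by $\mathbf{L}^1$ convergence and $\norm{G_x}_{\mathbf{L}^1(\R)}=1$, so $u$ satisfies \eqref{entropy}, and \eqref{L-1}, the Oleinik bound, and continuity in time into $\mathbf{L}^1$ are inherited in the limit.

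The main obstacle is uniqueness and $\mathbf{L}^1$-stability. Given two weak entropy solutions $u,v$ with data $\bar u,\bar v$, I would write each as a scalar balance law $u_t+(u^2/2)_x=g_u$ with the \emph{given} source $g_u:=G_x*u$, and run the Kruzhkov doubling of variables using \eqref{entropy} for $u$ and $v$ together with the $\mathbf{L}^\infty$/one-sided Lipschitz regularity available for $t>0$, to obtain $\frac{d}{dt}\norm{u(t,\cdot)-v(t,\cdot)}_{\mathbf{L}^1(\R)}\le\norm{g_u(t,\cdot)-g_v(t,\cdot)}_{\mathbf{L}^1(\R)}$. The nonlocality is then dispatched by the single clean estimate $\norm{g_u-g_v}_{\mathbf{L}^1(\R)}=\norm{G_x*(u-v)}_{\mathbf{L}^1(\R)}\le\norm{G_x}_{\mathbf{L}^1(\R)}\norm{u-v}_{\mathbf{L}^1(\R)}=\norm{u-v}_{\mathbf{L}^1(\R)}$, so the differential inequality closes and Gronwall gives $\norm{u(t,\cdot)-v(t,\cdot)}_{\mathbf{L}^1(\R)}\le e^t\norm{\bar u-\bar v}_{\mathbf{L}^1(\R)}$, with uniqueness the case $\bar u=\bar v$. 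I expect the doubling computation to be the delicate point: it must be carried out for solutions that are only $\mathbf{L}^\infty_{loc}$ for $t>0$ and merely $\mathbf{L}^1$ at $t=0$, so the estimate should first be proved on $[\delta,T]$ and then extended as $\delta\to0$ using the $\mathbf{L}^1$-continuity in time from the definition; the fact that the two sources depend on two different solutions is exactly what replaces the usual $\mathbf{L}^1$-contraction by the $e^t$-factor.
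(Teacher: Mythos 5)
Your uniqueness and stability argument is essentially the paper's own: treat the nonlocal term as a given inhomogeneous source, apply the Kruzhkov--Bressan stability estimate (the paper cites \cite[Theorem 6.2]{B}) on $[\delta,T]$ where both solutions are bounded, use $\norm{G_x*w}_{\mathbf{L}^1(\R)}\leq\norm{w}_{\mathbf{L}^1(\R)}$, close with Gronwall, and send $\delta\to0$ via the $\mathbf{L}^1$-continuity in time demanded by the definition. (One caveat: you invoke ``one-sided Lipschitz regularity'' of both solutions there; for an \emph{arbitrary} weak entropy solution only the $\mathbf{L}^\infty_{loc}(]0,\infty[,\mathbf{L}^\infty(\R))$ bound of the definition is available --- fortunately that is all the doubling argument needs.) For existence, however, your route is genuinely different: the paper uses flux splitting, alternating Burgers semigroup steps with discrete source updates, and derives the Oleinik bound \eqref{Pos-D} from the decay of $S^B$ (Lemma~\ref{lem1}) and Lemma~\ref{lem2} via a discrete Riccati-type recursion, precisely because the vanishing-viscosity argument of \cite{FC} needs BV data. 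Your proposed rescue of viscosity for $\mathbf{L}^1$ data --- the bootstrap $u^\epsi_x\leq W$ $\Rightarrow$ $\norm{u^\epsi}_{\mathbf{L}^\infty}\leq\sqrt{2W\norm{u^\epsi}_{\mathbf{L}^1}}$ fed into the Riccati inequality with the $1/t$ supersolution --- is a plausible alternative, though whether it reproduces the exact coefficient $\frac1t+2+2t+4te^t\norm{\bar u}_{\mathbf{L}^1(\R)}$ stated in the theorem (rather than a bound of the same shape with different constants) is left unverified in your sketch.

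The genuine gap is property (i) of the definition of weak entropy solution: you never prove that the limit $u$ is continuous in time with values in $\mathbf{L}^1(\R)$ --- not merely $\mathbf{L}^1_{loc}(\R)$ --- and that it attains $\bar u$ at $t=0$. Helly plus a diagonal argument only gives $\mathbf{L}^1_{loc}$ convergence, and an $\mathbf{L}^1_{loc}$ limit with a uniform $\mathbf{L}^1$ bound can still lose mass to infinity, or have mass wander near infinity discontinuously in time; the phrase ``are inherited in the limit'' is exactly the step that fails without a tightness estimate. This is where the paper invests most of its technical effort: the H{\"o}lder estimate for minimal and maximal backward characteristics (Lemma~\ref{lem:charac}) and the resulting Tightness Property \eqref{TN}, which give a uniform bound on $\int_{|x|>L(T)}|u^{\nu}(t,x)|\,dx$ and hence convergence and continuity in $\mathbf{L}^1(\R)$; and, near $t=0$, where all your uniform bounds (Oleinik, $\mathbf{L}^\infty$, BV) degenerate, the comparison $\norm{u(t,\cdot)-S^B_t(\bar u)}_{\mathbf{L}^1(\R)}\leq te^t\norm{\bar u}_{\mathbf{L}^1(\R)}$ together with continuity of the Burgers semigroup, yielding \eqref{limit-0}. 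To complete your program you would need viscous analogues of both: for instance a weighted $\mathbf{L}^1$ tail estimate for $u^\epsi$ (the exponential decay of $G_x$ makes the source tail controllable, so this is feasible), and a uniform-in-$\epsi$ modulus of continuity at $t=0$ (say by $\mathbf{L}^1$-contraction against viscous Burgers with BV comparison data, plus $\int_0^t\norm{G_x*u^\epsi(s,\cdot)}_{\mathbf{L}^1(\R)}ds\leq te^t\norm{\bar u}_{\mathbf{L}^1(\R)}$). These are fillable, but as written your construction does not produce a solution in the sense required by the theorem. A smaller omission of the same kind: passing to the limit in the convolution term from merely local convergence also needs the uniform $\mathbf{L}^1$ bound and the exponential decay of the kernel, not just ``$\mathbf{L}^1$ convergence.''
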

\quad\\
\noindent The above solutions will be constructed by a flux-splitting method. Relying on the decay properties of the semigroup generated by Burgers' equation \cite{M,O} and the Lipschitz continuity of solutions to the  Poisson equation (see Lemma~\ref{lem2}), we prove that the approximating solutions satisfy an Oleinik type inequality. As a consequence the sequence of approximating solutions is precompact and converges in ${\bf L}^1_{\mathrm{loc}}(\R)$. Moreover, we show using an energy estimate, that the characteristics are H{\"o}lder continuous. This allows us to derive a {\it Tightness Property} for the sequence of approximating solutions, which implies the continuity property of the solutions. 

It is well-known that the nonlocal Poisson source term in the Burgers--Poisson equation cannot prevent the breaking induced from the Burgers term $uu_x$. This means it is possible that the velocity slope $u_x$ blows up in finite time even if the initial data is very smooth and has small $C^1$-norm. In \cite{H-Liu}, a criterion on wave breaking has been established  in the class of spatially periodic solutions. By a careful study of the derivative of the solution $u_x$ along characteristics, 
we extend the result in  \cite{H-Liu} to the general case (see Theorem~\ref{WB}). Furthermore, we provide  some criteria on local smoothness. In particular, we prove that if the ${\bf L}^{\infty}$-norm of the derivative $u_{0,x}$ of the initial data $u_0$ is small then the corresponding weak entropy solution of (\ref{BP}) will remain smooth for a large time. 
\quad\\
\\
The paper is organized as follows. In Section \ref{sec:2} we construct approximating solutions and provide some a priori estimates.  In Section \ref{sec:3} we will prove the existence and uniqueness for weak entropy solutions. Finally, we are going to study local smoothness and wave breaking criteria for weak entropy solutions.

\section{Approximating solutions and some a priori estimates}
\label{sec:2}

The proof of the existence and uniqueness of weak entropy solutions to the Burgers-Poisson equation with initial data in $\mathbf{L}^1(\R)$ is based on a limiting process for approximating solutions, which are constructed by the flux-splitting method. Thus this section is concerned with the construction of the approximating solutions as well as the derivation of some a priori estimates for them.

\setcounter{equation}{0}

\quad\\
{\bf 1. Approximating solutions.} 
For some fixed integer $\nu\geq 1$, we define the time steps
\[
t_i=i\cdot 2^{-\nu},\qquad i=0,1,2,\dots.
\]
The approximating solution of (\ref{BP}) is then defined inductively as
\bel{unudef}\left\{\bega{l} u^\nu(0)=\bar u,\qquad\qquad u^\nu(t_{i}) = u^\nu(t_{i} -)+2^{-\nu}\cdot
[G_x*u^\nu(t_i-)],\qquad i=1,2,\ldots,\cr\cr
u^\nu(t)=S^B_{t-t_i}(u^\nu(t_i)),\qquad\qquad 
t\in [t_i, \, t_{i+1}[\,,\quad i=0,1,2,\ldots.\enda\right.\eeq
Here $S^B$ denotes the semigroup generated by Burgers' equation. 
More precisely,
$t\mapsto S^B_t(\bar u)$ denotes the Kruzkov entropy solution to
\bel{CPB}
u_t+\left(\frac{u^2}{2}\right)_x=0\qquad\qquad u(0,x)=\bar u(x)\in 
\mathbf{L}^1(\R)\,.\eeq

\noindent For every $u\in \mathbf{L}^1(\R)$, we have
\bel{L-1B}
\norm{S^B_t(u)}_{\mathbf{L}^1(\R)}\leq\norm{u}_{\mathbf{L}^1(\R)},\qquad\text{for all } t\geq0\,,
\eeq
and 
\bel{L-1G}
\norm{G_x*u}_{\mathbf{L}^1(\R)}\leq\norm{u}_{\mathbf{L}^1(\R)}\,,
\eeq
which implies
\begin{eqnarray*}
\norm{u^{\nu}(t_{i})}_{\mathbf{L}^{1}(\R)}&\leq&\norm{S^B_{2^{-\nu}}(u^{\nu}(t_{i-1}))}_{\mathbf{L}^{1}(\R)}+2^{-\nu} \norm{[G_x*S^B_{2^{-\nu}}(u^{\nu}(t_{i-1}))]}_{\mathbf{L}^{1}(\R)}\\
\\
&\leq&(1+2^{-\nu})\norm{u^{\nu}(t_{i-1})}_{\mathbf{L}^{1}(\R)}\leq(1+2^{-\nu})^{i}\norm{\bar{u}}_{\mathbf{L}^{1}(\R)}\\
\\
&=&(1+2^{-\nu})^{2^{\nu} t_i}\norm{\bar{u}}_{\mathbf{L}^{1}(\R)}\leq e^{t_i} \norm{\bar{u}}_{\mathbf{L}^{1}(\R)}\,.
\end{eqnarray*}
By \eqref{L-1B}, we obtain that 
\bel{L-1-A}
\norm{u^{\nu}(t)}_{\mathbf{L}^1(\R)}\leq e^{t}\norm{\bar{u}}_{\mathbf{L}^{1}(\R)},\qquad \text{for all }t\geq 0\,.
\eeq
\quad\\

\noindent {\bf 2. Oleinik type inequality.} We claim that for any $i\geq 1$ and $t\in [t_i,t_{i+1}[$ it holds that 
\bel{Pos-D}
u^{\nu}(t,x_2)-u^{\nu}(t,x_1)\leq\Big[\frac1{t_i}+2+2 t_i+4e^{t_i}t_i\norm{\bar{u}}_{\mathbf{L}^1(\R)}\Big] \cdot (x_2-x_1),\quad\text{for all } x_1<x_2\,.
\eeq
The proof relies heavily on the positive decay of Burgers' semigroup and the Lipschitz continuity of solutions to the Poisson equation.

\begin{lemma}\label{lem1}
Let $u_0\in \mathbf{L}^1(\R)$ be such that
\bel{as1}
u_0(x_2)-u_0(x_1)\leq K \cdot (x_2-x_1),\qquad\qquad\text{for all } x_1<x_2\,,
\eeq
for some constant $K>0$.
Then
\bel{O-L1}
S^B_t(u_0)(x_2)-S^B_t(u_0)(x_1)\leq\frac{K}{1+Kt}\cdot (x_2-x_1),\qquad\text{for all } x_1<x_2\,.
\eeq
 \end{lemma}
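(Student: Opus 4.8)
The plan is to derive \eqref{O-L1} from the Lax--Oleinik representation of the Kruzkov solution, reducing it to a one-line algebraic inequality once the monotonicity of backward characteristics and the characteristic identity are established. Set $U_0(y)=\int_0^y u_0(s)\,ds$, which is bounded since $u_0\in\mathbf L^1(\R)$. For Burgers' flux $f(u)=u^2/2$ the entropy solution admits, for every $t>0$ and a.e.\ $x$, the representation
\[
S^B_t(u_0)(x)=\frac{x-y(t,x)}{t},
\]
where $y(t,x)$ is a minimizer of $y\mapsto U_0(y)+\frac{(x-y)^2}{2t}$; such a minimizer exists because the quadratic term dominates the bounded $U_0$, making the functional coercive. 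I would then isolate two classical facts: (a) the selection $x\mapsto y(t,x)$ can be taken non-decreasing, and (b) at every minimizer $u_0(y(t,x))=(x-y(t,x))/t=S^B_t(u_0)(x)$.

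For (a) the two-line no-crossing argument suffices: for $x_1<x_2$ with minimizers $y_1,y_2$, adding the two minimality inequalities and cancelling the square terms leaves $(x_1-x_2)(y_2-y_1)\le 0$, which forces $y_1\le y_2$. For (b) I would use the first-order optimality condition at a minimizer $y_0$, namely $u_0(y_0-)\le (x-y_0)/t\le u_0(y_0+)$; since the hypothesis \eqref{as1} forbids upward jumps of $u_0$, this chain forces $u_0$ to be continuous at $y_0$ and yields $u_0(y_0)=(x-y_0)/t$. Thus minimizers necessarily land at continuity points of $u_0$, where $U_0'=u_0$.

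With (a) and (b) the conclusion is pure algebra. Fix $x_1<x_2$, put $y_i=y(t,x_i)$, and set
\[
a:=S^B_t(u_0)(x_2)-S^B_t(u_0)(x_1),\qquad b:=y_2-y_1\ge 0 .
\]
The representation gives $ta=(x_2-x_1)-b$, i.e.\ $x_2-x_1=b+ta$, while (b) together with \eqref{as1} gives $a=u_0(y_2)-u_0(y_1)\le Kb$. If $a\le 0$ the claim is trivial since $\tfrac{K}{1+Kt}(x_2-x_1)>0$. If $a>0$, then $b\ge a/K$, so $x_2-x_1=b+ta\ge a(1+Kt)/K$, which rearranges to exactly $a\le \frac{K}{1+Kt}(x_2-x_1)$.

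I expect the main obstacle to be purely technical: making the Lax--Oleinik machinery rigorous for initial data that are only $\mathbf L^1$ and possibly discontinuous, in particular justifying (b) at the at most countably many downward jumps of $u_0$ and at the shocks, where the minimizer is non-unique and one must pass to the appropriate extremal value. A cleaner route that avoids this bookkeeping is the viscous approximation: for $u^\epsilon$ solving $u_t+uu_x=\epsilon u_{xx}$, the quantity $w=u^\epsilon_x$ satisfies $w_t+u^\epsilon w_x+w^2=\epsilon w_{xx}$, so $M(t)=\sup_x w(t,\cdot)$ obeys the Riccati inequality $\dot M\le -M^2$ by the maximum principle; integrating from $M(0)\le K$ yields $M(t)\le K/(1+Kt)$, after which one passes to the limit $\epsilon\to 0$. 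I would present the Lax--Oleinik argument as the main line and keep the viscous computation in reserve should the regularity bookkeeping become delicate.
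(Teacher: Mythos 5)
Your proof is correct and follows essentially the same route as the paper: the paper likewise identifies $S^B_t(u_0)(x_i)=u_0(\xi_{x_i}(0))$ with $x_i=\xi_{x_i}(0)+t\,u_0(\xi_{x_i}(0))$ (your representation together with fact (b), where $\xi_{x_i}(0)=y(t,x_i)$), implicitly uses the monotone ordering of the characteristic feet (your fact (a)), and finishes with the same algebraic rearrangement. The only difference is presentational: you derive these facts rigorously from the Hopf--Lax variational formula, whereas the paper quotes them from the theory of backward characteristics.
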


 \begin{proof}
 It is sufficient to prove \eqref{O-L1} for any point of continuity $x_i$  of $S^B_t(u_0)$. Let $\xi_{x_i}(\cdot)$ be the characteristic through the point $(t,x_i)$, then we have 
 \[
 x_i=\xi_{x_i}(0)+t u_0(\xi_{x_i}(0))\quad\text{and}\quad  S^B_t(u_0)(x_i)~=~u_0(\xi_{x_i}(0))\,.
 \]
From the assumption \eqref{as1}, we get
\begin{eqnarray*}
 x_2-x_1&=&\xi_{x_2}(0)-\xi_{x_1}(0)+t\cdot (u_0(\xi_{x_2}(0))-u_0(\xi_{x_1}(0)))\\
 \\
 &\leq& (1+Kt)\cdot \big[\xi_{x_2}(0)-\xi_{x_1}(0)\big]\,,
 \end{eqnarray*}
which implies that 
\begin{eqnarray*}
S^B_t(u_0)(x_2)-S^B_t(u_0)(x_1)&=&\frac1{t}\cdot \big[(x_2-x_1)-(\xi_{x_2}(0)-\xi_{x_1}(0))\big]\\
\\
&\leq& \frac{K}{1+Kt}\cdot (x_2-x_1)\,,
\end{eqnarray*}
and the proof is complete.
\end{proof}

\begin{lemma}\label{lem2}
Let $u_0\in \mathbf{L}^1(\R)$ be such that
\bel{as2}
u_0(x_2)-u_0(x_1)\leq K\cdot (x_2-x_1),\qquad\qquad\text{for all }x_1<x_2\,,
\eeq
for some constant $K>0$.
Then
\bel{L-G}
\big|[G_x*u_0](x_2)-[G_x*u_0](x_1)\big|\leq~\Big[\norm{u_0}_{\mathbf{L}^1(\R)}+ \sqrt{2K\norm{u_0}_{\mathbf{L}^1(\R)}}~\Big] \cdot |x_2-x_1|\,.
\eeq
\end{lemma}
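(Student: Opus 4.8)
The plan is to reduce the claimed one-sided-Lipschitz-to-Lipschitz statement for $G_x*u_0$ to two ingredients: a pointwise $\mathbf{L}^\infty$ bound on $u_0$ itself, extracted from the Oleinik hypothesis \eqref{as2}, and the observation that $G_x*u_0$ is absolutely continuous with an explicitly computable derivative.

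The key step — and the only place where \eqref{as2} really enters — is to show that
\[
\norm{u_0}_{\mathbf{L}^\infty(\R)}\le \sqrt{2K\norm{u_0}_{\mathbf{L}^1(\R)}}\,.
\]
First I would fix the representative of $u_0$ for which $x\mapsto u_0(x)-Kx$ is non-increasing (this is exactly \eqref{as2}), so that $u_0$ has one-sided limits everywhere. To bound the positive part, pick a point $x_0$ where $u_0(x_0)$ is close to $p:=\operatorname{ess\,sup}u_0>0$. Hypothesis \eqref{as2}, rewritten as $u_0(x)\ge u_0(x_0)-K(x_0-x)$ for $x<x_0$, forces $u_0$ to stay positive on the whole interval $(x_0-p/K,\,x_0)$ and to dominate there the linear profile $p-K(x_0-x)$. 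Integrating this triangle gives $\norm{u_0}_{\mathbf{L}^1(\R)}\ge p^2/(2K)$, i.e. $p\le\sqrt{2K\norm{u_0}_{\mathbf{L}^1(\R)}}$. The symmetric argument applied to the right of a near-minimum point (where \eqref{as2} now propagates large negative values rightward) bounds the negative part identically, which yields the displayed $\mathbf{L}^\infty$ estimate.

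Next I would record the identity
\[
\frac{d}{dx}[G_x*u_0]=[G*u_0]+u_0\,,
\]
which is nothing but the Poisson relation $G_{xx}=G+\delta_0$ (equivalently $\varphi_{xx}-\varphi=\delta_0$) convolved with $u_0$. Concretely one splits $[G_x*u_0](x)=\tfrac12\int_{-\infty}^x e^{z-x}u_0\,dz-\tfrac12\int_x^\infty e^{x-z}u_0\,dz$, checks that each term is absolutely continuous, and differentiates, the unit jump of the kernel $G_x$ at the origin producing precisely the $u_0$ summand. Since $u_0\in\mathbf{L}^1(\R)$, both summands are locally integrable and $G_x*u_0$ is absolutely continuous, so for $x_1<x_2$,
\[
[G_x*u_0](x_2)-[G_x*u_0](x_1)=\int_{x_1}^{x_2}\big([G*u_0](x)+u_0(x)\big)\,dx\,.
\]

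Finally I would close the estimate by the triangle inequality under the integral sign, using the trivial bound $\abs{[G*u_0](x)}\le\tfrac12\norm{u_0}_{\mathbf{L}^1(\R)}\le\norm{u_0}_{\mathbf{L}^1(\R)}$ (as $e^{-|x-z|}\le 1$) together with the $\mathbf{L}^\infty$ bound $\abs{u_0(x)}\le\sqrt{2K\norm{u_0}_{\mathbf{L}^1(\R)}}$ from the key step. This bounds the integrand by $\norm{u_0}_{\mathbf{L}^1(\R)}+\sqrt{2K\norm{u_0}_{\mathbf{L}^1(\R)}}$ uniformly, and integrating over $[x_1,x_2]$ gives \eqref{L-G}, the absolute value $\abs{x_2-x_1}$ covering both orderings (and the factor $\tfrac12$ leaving slack). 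The main obstacle is entirely in the key step: realizing that the one-sided Lipschitz bound alone, with no a priori sup bound, already forces a global $\mathbf{L}^\infty$ estimate through the $\mathbf{L}^1$-mass of the triangle it creates; once this is in hand, everything else is routine.
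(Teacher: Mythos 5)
Your proof is correct, and it differs from the paper's in both of its main steps. For the Lipschitz bound on $G_x*u_0$, the paper argues directly on the convolution: it splits $[G_x*u_0](x_2)-[G_x*u_0](x_1)$ into the three regions $z<x_1$, $z>x_2$, $x_1<z<x_2$ and estimates the kernel differences there, arriving at the intermediate bound $\big[\norm{u_0}_{\mathbf{L}^1(\R)}+\norm{u_0}_{\mathbf{L}^\infty(\R)}\big]\cdot|x_2-x_1|$. You instead differentiate, using the distributional identity $G_{xx}=G+\delta_0$ to write $[G_x*u_0](x_2)-[G_x*u_0](x_1)=\int_{x_1}^{x_2}\big([G*u_0]+u_0\big)\,dx$ and then bounding the integrand; this is essentially the integrated form of the paper's three-term splitting, but it is cleaner, makes the Poisson structure explicit, and even yields the slightly sharper constant $\tfrac12\norm{u_0}_{\mathbf{L}^1(\R)}+\norm{u_0}_{\mathbf{L}^\infty(\R)}$. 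The more substantive difference concerns the key estimate $\norm{u_0}_{\mathbf{L}^\infty(\R)}\le\sqrt{2K\norm{u_0}_{\mathbf{L}^1(\R)}}$: the paper does not prove it but refers to \cite[Lemma 4.2]{FOK}, whereas you supply the full triangle argument (a near-extremal positive value $p$ forces, through the one-sided Lipschitz condition \eqref{as2}, a linear profile of $\mathbf{L}^1$-mass at least $p^2/(2K)$ on the interval to its left, and symmetrically a near-minimal negative value propagates rightward), which is precisely the kind of argument behind that citation. So your write-up is self-contained where the paper outsources the crucial step; both routes produce the stated constant, yours with room to spare.
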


\begin{proof}
For any $x_1<x_2$, we compute 
\begin{multline*}
\big|[G_x*u_0](x_2)-[G_x*u_0](x_1)\big|~\leq~ \frac12 \int^{x_1}_{-\infty}|e^{z-x_1}-e^{z-x_2}|\cdot |u_0(z)|~dz\\+\frac12\int_{x_2}^{\infty}|e^{x_1-z}-e^{x_2-z}|\cdot |u_0(z)|~dz+\frac12\int_{x_1}^{x_2}|e^{x_1-z}+e^{z-x_2}|\cdot |u_0(z)|~dz\\~\leq~\Big[\norm{u_0}_{\mathbf{L}^1}+\norm{u_0}_{\mathbf{L}^\infty}\Big]\cdot |x_2-x_1|\,.
\end{multline*}
Concluding as in the proof of \cite[Lemma 4.2]{FOK}, \eqref{as2} implies that 
\bel{L-I-A}
\norm{u_0}_{\mathbf{L}^\infty}\leq\sqrt{2K\norm{u_0}_{\mathbf{L}^1(\R)}}~\,,
\eeq
and hence (\ref{L-G}).
\end{proof}

\noindent
Using Lemma~\ref{lem1} and Lemma~\ref{lem2}, we now show by induction that for any $i=1,2, \dots$, one has
\bel{OL-1}
u^{\nu}(t_i-,x_2)-u^{\nu}(t_i-,x_1)\leq a_i\cdot(x_2-x_1),\qquad\text{for all } x_1<x_2\,,
\eeq
where 
\[
a_{1}=2^{\nu}\quad\text{and}\quad a_{i+1}=\frac{(1+2^{-\nu})\cdot a_i+2^{-\nu+1}e^{t_i} \norm{\bar{u}}_{\mathbf{L}^1(\R)}}{1+ \Big[(1+2^{-\nu})\cdot a_i+2^{-\nu+1} e^{t_i} \norm{\bar{u}}_{\mathbf{L}^1(\R)} \Big]\cdot 2^{-\nu}}\,.
\]
Indeed, since $u^{\nu}(t_1-,\cdot)=S^B_{2^{-\nu}}(\bar{u})(\cdot)$, \eqref{OL-1} holds for $i=1$ by Oleinik's inequality, see e.g. \cite[Chapter 3.4]{E}.  Assume that \eqref{OL-1} holds for all indices up to $i$. 
Then it follows from Lemma~\ref{lem2} and \eqref{L-1-A}, that
\begin{multline}\label{onesided}
u^{\nu}(t_i,x_2)-u^{\nu}(t_i,x_1)~=~u^{\nu}(t_i-,x_2)-u^{\nu}(t_i-,x_1)
 \\
\qquad\qquad\qquad\qquad\qquad\qquad + 2^{-\nu}\cdot\big (
[G_x*u^\nu (t_i-)](x_2)-[G_x*u^\nu (t_i-)](x_1)\big) \cr
\qquad\qquad\leq~\Big(a_i+2^{-\nu} \Big(\norm{u^\nu(t_i-)}_{\mathbf{L}^1(\R)}+ \sqrt{2a_i\norm{u^\nu(t_i-)}_{\mathbf{L}^1(\R)}}~\Big)\Big)\cdot (x_2-x_1)\cr
 \qquad\qquad\leq~\Big(a_i+2^{-\nu}\Big(e^{t_i}\norm{\bar{u}}_{\mathbf{L}^1(\R)}+ \sqrt{2e^{t_i}a_i \norm{\bar{u}}_{\mathbf{L}^1(\R)}}\Big)\Big)\cdot (x_2-x_1)\cr
 ~\leq~\Big((1+2^{-\nu}) a_i+2^{-\nu+1} e^{t_i} \norm{\bar{u}}_{\mathbf{L}^1(\R)} \Big)\cdot (x_2-x_1) 
\end{multline}
for any $x_1<x_2$.\\
\quad\\
\noindent Applying Lemma~\ref{lem1} to $u_0(\cdot)=u^{\nu}(t_i,\cdot)$ and $t=2^{-\nu}$, we obtain 
\begin{multline*}
u^{\nu}(t_{i+1}-,x_2)-u^{\nu}(t_{i+1}-,x_1)~=~S^B_{2^{-\nu}}(u^{\nu}(t_i))(x_2)-S^B_{2^{-\nu}}(u^{\nu}(t_i))(x_1)\\
\\
~\leq~\frac{(1+2^{-\nu}) a_i+2^{-\nu+1} e^{t_i} \norm{\bar{u}}_{\mathbf{L}^1(\R)}}{1+ \Big[(1+2^{-\nu}) a_i+2^{-\nu+1} e^{t_i} \norm{\bar{u}}_{\mathbf{L}^1(\R)} \Big] \cdot 2^{-\nu}}\cdot(x_2-x_1)\,,
\end{multline*}
for any $x_1<x_2$, which is (\ref{OL-1}) for $i+1$.\\

\noindent
Note that \eqref{onesided} together with Lemma~\ref{lem1} implies, for all $t\in[t_i,t_{i+1}[$, that 
\begin{equation*}
u^{\nu}(t,x_2)-u^{\nu}(t,x_1)~\leq~ \big[(1+2^{-\nu})a_i+2^{-\nu+1}e^{t_i}\norm{\bar u}_{\mathbf{L}^1(\R)}\big]\cdot (x_2-x_1)
\end{equation*}
for all $x_1<x_2$. Hence \eqref{Pos-D} follows if we can show that
\begin{equation}\label{helpest}
(1+2^{-\nu})a_i+2^{-\nu+1}e^{t_i}\norm{\bar u }_{\mathbf{L}^1(\R)}~\leq ~\frac1{t_i}+2+2t_i+4e^{t_i}t_i\norm{\bar u}_{\mathbf{L}^1(\R)}.
\end{equation}
We therefore establish an upper bound for $\{a_i\}$. Observe first  that 
$$
\frac{1}{a_{i+1}}= 2^{-\nu}+\frac{1}{1+\Big(1+\frac{2e^{t_i}\norm{\bar{u}}_{\mathbf{L}^1(\R)}}{a_i}\Big)2^{-\nu}}\cdot \frac{1}{a_i}\,.
$$
Fix any $T>0$, set 
\[
K_T=1+2e^{T}\norm{\bar{u}}_{\mathbf{L}^1(\R)}\,,
\]
and define the sequence $\{z_i\}$ by
\bel{Se1}
z_1=2^{-\nu} \quad\text{and}\quad z_{i+1}=2^{-\nu}+\frac{1}{1+(1+K_T z_i) 2^{-\nu}}\cdot z_i\quad\text{for all } 1\leq i+1\leq T\cdot  2^{\nu}\,.
\eeq
By a comparison argument, one has
\bel{z<a-1}
z_{i+1}\leq\frac{1}{a_{i+1}},\qquad\text{for all }1\leq i+1\leq T \cdot 2^{\nu}\,.
\eeq
On the other hand, since 
\[
z_{i+1}\leq2^{-\nu}+z_i\qquad\text{for all } 1\leq i+1\leq T \cdot 2^{\nu}
\]
it holds that
\[
z_{i+1}~\leq~i\cdot 2^{-\nu}+z_1~=~(i+1)\cdot 2^{-\nu}~\leq~ T, \quad\text{for all } 1\leq i+1\leq T\cdot 2^{\nu}\,.
\]
Recalling (\ref{Se1}) we get 
\[
z_{i+1}~\geq~2^{-\nu}+\frac{1}{1+\big(1+K_T T\big)\cdot 2^{-\nu}}\cdot z_i~,\quad\text{for all } 1\leq i+1\leq T \cdot 2^{\nu}\,.
\]
Equivalently,
\[
z_{i+1}-\alpha~\geq~\frac{1}{1+\big(1+K_T T\big)\cdot 2^{-\nu}}\cdot (z_i-\alpha),\quad\text{for all } 1\leq i+1\leq T \cdot 2^{\nu}
\]
where $ \alpha:=2^{-\nu}+\frac{1}{1+K_TT}$. This implies that 
\begin{eqnarray*}
z_{i+1}-\alpha&\geq&\frac{1+\big(1+K_T T\big)\cdot 2^{-\nu}}{[1+\big(1+K_T  T\big)\cdot 2^{-\nu}]^{i+1}}\cdot  (z_1-\alpha)
\\
\\
&=&-\frac{1+\big(1+K_T T\big)\cdot 2^{-\nu}}{ [1+\big(1+K_T  T\big)\cdot2^{-\nu}]^{i+1}} \cdot \frac{1}{1+K_T T}
\\
\\
&\geq&-\frac{1+\big(1+K_T T\big)\cdot 2^{-\nu}}{1+ \big(1+K_T T\big)\cdot t_{i+1}}\cdot  \frac{1}{1+K_T T}
\end{eqnarray*}
for all $1\leq i+1\leq T \cdot 2^{\nu}$. Thus,
\begin{eqnarray*}
z_{i+1}&\geq&\frac{1}{1+K_T T}\cdot \Big[1-\frac{1}{1+\big(1+K_T T\big)\cdot t_{i+1}}\Big]
\cr\cr
& &\qquad\qquad \qquad\qquad+~2^{-\nu}\cdot \Big[1-\frac{1}{1+\big(1+K_T T\big)\cdot t_{i+1}}\Big]
\\
\\
&\geq&\frac{t_{i+1}}{1+\big(1+K_T T\big)\cdot t_{i+1}},\qquad\text{for all } 1\leq i+1\leq T\cdot 2^{\nu}\,.
\end{eqnarray*}
Recalling \eqref{z<a-1}, we have
\[
a_{i+1}~\leq~\frac{1}{z_{i+1}}~\leq~\frac{1}{t_{i+1}}+(1+K_T T),\qquad\text{for all } 1\leq i+1\leq T\cdot 2^{\nu}\,,
\]
and in particular, 
\[
a_{\lfloor T \cdot 2^{\nu}\rfloor +1}~\leq~\frac{1}{\lfloor T \cdot2^{\nu}\rfloor +1}+(1+K_T T)\,.
\]
Since the above inequality holds for any $T>0$, we obtain 
\[
a_{i}~\leq~\frac{1}{t_i}+1+t_i+2e^{t_i}t_i\norm{\bar{u}}_{\mathbf{L}^1(\R)},\qquad\text{for all } i\geq 1\,,
\]
which implies \eqref{helpest} and thus \eqref{Pos-D}.
\quad\\
\quad\\
{\bf 3. Minimal and maximal backward characteristics.}
Given some initial data $\bar u(x)$, we can split it into a positive and a negative part
\begin{equation*}
\bar u(x)=\max \{ \bar u(x), 0\}+\min\{\bar u(x), 0\}= \bar u^+(x)+\bar u^-(x).
\end{equation*}
Similarly we can split the source term for each $x\in\R$ into a positive and a negative part,
\begin{equation*}
Q^{\nu}(t_i,x)=[G_x*u^{\nu}(t_i)](x)=Q^{\nu,+}(t_i,x)+Q^{\nu,-}(t_i,x).
\end{equation*}
We then define the function $u^{\nu,+}(t)$ as follows
\begin{equation*}
u^{\nu,+}(t)=S_{t-t_i}^B(u^{\nu,+}(t_i)), \quad t\in[t_i,t_{i+1}[, 
\end{equation*}
\begin{equation*}
u^{\nu,+}(0)=\bar u^+, \quad u^{\nu,+}(t_i)=u^{\nu,+}(t_i-)+2^{-\nu} Q^{\nu,+}(t_i).
\end{equation*}
Similarly one defines the function $u^{\nu,-}(t)$ as follows
\begin{equation*}
u^{\nu,-}(t)=S_{t-t_i}^B (u^{\nu,-}(t_i)), \quad t\in[t_i,t_{i+1}[, 
\end{equation*}
\begin{equation*}
u^{\nu,-}(0)=\bar u^-, \quad u^{\nu,-}(t_i)=u^{\nu,-}(t_i-)+2^{-\nu} Q^{\nu,-}(t_i).
\end{equation*}
Here it should be noted that one has in general 
\begin{equation*}
u^{\nu}(t,x)\not = u^{\nu,+}(t,x)+u^{\nu, -}(t,x).
\end{equation*}
However, one has 
\begin{align*}
u^{\nu,-}(t,x)~\leq~0~\leq~u^{\nu,+}(t,x), &\qquad  \norm{u^{\nu,\pm}(t,\cdot)}_{\mathbf{L}^1(\R)}~\leq~e^t \norm{\bar u}_{\mathbf{L}^1(\R)}\\
u^{\nu,-}(t,x)&~ \leq ~ u^{\nu}(t,x)~\leq ~u^{\nu,+}(t,x),\\
\norm{Q^{\nu,\pm}(t_i,\cdot)}_{\mathbf{L}^1(\R)}& ~\leq ~\norm{u^\nu(t_i,\cdot)}_{\mathbf{L}^1(\R)} ~\leq ~ e^{t_i}\norm{\bar u}_{\mathbf{L}^1(\R)}.
\end{align*}
\quad\\
\noindent
Denote by $t\mapsto x(t)$ the generalized characteristic to the approximating solution $u^{\nu}(t,x)$ through the point $(\tau, x(\tau))$.
In addition, let $t\mapsto y(t)$ be the minimal backward characteristic, i.e. the characteristic for the positive solution $u^{\nu,+}(t,x)$ through the point $(\tau, x(\tau))$. Then $u^{\nu}(t,x)\leq u^{\nu,+}(t,x)$ implies that $y(t)\leq x(t)$ for all $t\in[0,\tau]$ and in particular $x(\tau)-x(t)\leq y(\tau)-y(t)$ for all $t\in[0,\tau]$. To estimate $y(\tau)-y(t)$, we compute
\begin{align*}
0&~ \leq~ \int_{y(\tau)}^\infty u^{\nu,+}(\tau,x)~dx\\
&~ \leq ~\int_{y(t)}^\infty u^{\nu,+}(t,x)dx +\sum_{t<t_i\leq \tau} \int_{y(t_i)}^\infty(u^{\nu,+}(t_i,x)-u^{\nu,+}(t_i-,x))~dx\\
&\qquad+ \int_{t}^\tau \frac12 u^{\nu,+,2}(s,y(s))-u^{\nu,+}(s,y(s))\dot y(s)~ds\\
& \leq \norm{ u^{\nu,+}(t,.)}_{\mathbf{L}^1(\R)}+ \sum_{t<t_i\leq \tau} \int_{y(t_i)}^\infty 2^{-\nu}Q^{\nu,+}(t_i,x)dx-\frac12 \int_t^\tau \dot y(s)^2~ds\\
& ~\leq~ e^\tau \norm{\bar u}_{\mathbf{L}^1(\R)}+\sum_{t<t_i\leq \tau} 2^{-\nu}\norm{ Q^{\nu,+}(t_i,\cdot)}_{\mathbf{L}^1(\R)}-\frac12 \int_t^\tau \dot y(s)^2~ds\\
& ~\leq~ e^\tau \norm{\bar u}_{\mathbf{L}^1(\R)}+\sum_{t<t_i\leq \tau} 2^{-\nu} e^{t_i}\norm{\bar u}_{\mathbf{L}^1(\R)}-\frac 12 \int_t^\tau \dot y(s)^2~ds\\
& ~\leq~ (1+\tau)e^\tau \norm{\bar u}_{\mathbf{L}^1(\R)}-\frac12 \int_t^\tau \dot y(s)^2ds\,.
\end{align*}
Applying the Cauchy Schwarz inequality then yields
\begin{multline*}
0~\leq~ y(\tau)-y(t)~=~\int_{t}^{\tau} \dot y(s)ds\\~\leq~ (\tau-t)^{1/2}\left(\int_t^\tau \dot y(s)^2 ds\right)^{1/2}\leq \sqrt{2(1+\tau)e^\tau \norm{\bar u}_{\mathbf{L}^1(\R)}}\cdot (\tau-t)^{1/2}.
\end{multline*}
\quad\\
\noindent 
Denote by $t\mapsto \tilde y(t)$ the maximal backward characteristic, i.e. the characteristic for the negative solution $u^{\nu,-}(t,x)$ through the point $(\tau, x(\tau))$. Then $u^{\nu,-}(t,x)\leq u^{\nu}(t,x)$ implies that $x(t)\leq \tilde y(t)$ for all $t\in[0,\tau]$, and in particular $\tilde y(\tau)-\tilde y(t)\leq x(\tau)-x(t)$ for all $t\in[0,\tau]$. A similar argument as before shows that 
\begin{equation*}
0\leq \tilde y(t)-\tilde y(\tau)\leq \sqrt{2(1+\tau)e^\tau \norm{\bar u}_{\mathbf{L}^1(\R)}}\cdot (\tau-t)^{1/2}.
\end{equation*}

Since 
\begin{equation*}
 \tilde y(\tau)-\tilde y(t)~\leq ~x(\tau)-x(t)\leq y(\tau)-y(t),
\end{equation*}
we have shown the following lemma.

\begin{lemma}\label{lem:charac}
For any $\nu\geq 1$, let $t\mapsto x(t)$ be any characteristic for the approximate solution $u^{\nu}(t,x)$. Then
\begin{equation}
\vert x(\tau)-x(t)\vert ~\leq ~C_1\cdot (\tau-t)^{1/2} \quad \text{ for all }0\leq t<\tau\leq T\,,
\end{equation}
where $C_1=\sqrt{2(1+T)e^T \norm{\bar u}_{\mathbf{L}^1(\R)}}$.
\end{lemma}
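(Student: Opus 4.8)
The plan is to control the characteristic $t\mapsto x(t)$ by squeezing it between two one-signed backward characteristics and then to prove a H\"older-$\tfrac12$ bound for each of those via an energy estimate. Propagating the positive and negative parts $u^{\nu,+},u^{\nu,-}$ separately with their own source updates $Q^{\nu,\pm}$, I would first record the sandwich $\tilde y(\tau)-\tilde y(t)\le x(\tau)-x(t)\le y(\tau)-y(t)$, where $y$ is the minimal backward characteristic through $(\tau,x(\tau))$ for the positive solution $u^{\nu,+}$ and $\tilde y$ is the maximal backward characteristic for the negative solution $u^{\nu,-}$. This follows from the comparison $u^{\nu,-}\le u^{\nu}\le u^{\nu,+}$, which forces the characteristics of the one-signed pieces to lie on either side of those of $u^{\nu}$ (with equality at the common endpoint $s=\tau$). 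It therefore suffices to bound $|y(\tau)-y(t)|$ and $|\tilde y(\tau)-\tilde y(t)|$ by $C_1(\tau-t)^{1/2}$.

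The core of the argument is an energy estimate for the minimal backward characteristic. I would track the mass of $u^{\nu,+}$ to the right of $y(\cdot)$, namely $s\mapsto\int_{y(s)}^{\infty}u^{\nu,+}(s,x)\,dx$, and integrate the conservation law $u_t+(u^2/2)_x=0$ over the moving half-line $[y(s),\infty)$ on each interval $[t_i,t_{i+1}[$. The transport identity produces the boundary flux $\tfrac12(u^{\nu,+})^2(s,y(s))-u^{\nu,+}(s,y(s))\dot y(s)$; since $y$ is a characteristic of $u^{\nu,+}$ one has $\dot y(s)=u^{\nu,+}(s,y(s))$, so this flux collapses to $-\tfrac12\dot y(s)^2$. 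Adding the jump contributions $2^{-\nu}\int_{y(t_i)}^{\infty}Q^{\nu,+}(t_i,x)\,dx$ coming from the source updates, the mass at time $\tau$ is controlled by the initial mass, the accumulated source mass, and $-\tfrac12\int_t^{\tau}\dot y(s)^2\,ds$. Invoking the bounds $\norm{u^{\nu,+}(t,\cdot)}_{\mathbf{L}^1(\R)}\le e^{t}\norm{\bar u}_{\mathbf{L}^1(\R)}$ and $\norm{Q^{\nu,+}(t_i,\cdot)}_{\mathbf{L}^1(\R)}\le e^{t_i}\norm{\bar u}_{\mathbf{L}^1(\R)}$ together with $\sum_{t<t_i\le\tau}2^{-\nu}\le\tau$, the non-negativity of the mass yields $0\le(1+\tau)e^{\tau}\norm{\bar u}_{\mathbf{L}^1(\R)}-\tfrac12\int_t^{\tau}\dot y(s)^2\,ds$, i.e. the $\nu$-uniform $\mathbf{L}^2$ bound $\int_t^{\tau}\dot y(s)^2\,ds\le 2(1+\tau)e^{\tau}\norm{\bar u}_{\mathbf{L}^1(\R)}$ on the characteristic speed.

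With the $\mathbf{L}^2$ bound in hand, the H\"older-$\tfrac12$ estimate is immediate from Cauchy--Schwarz: $0\le y(\tau)-y(t)=\int_t^{\tau}\dot y(s)\,ds\le(\tau-t)^{1/2}\big(\int_t^{\tau}\dot y(s)^2\,ds\big)^{1/2}\le C_1(\tau-t)^{1/2}$, with $C_1=\sqrt{2(1+T)e^{T}\norm{\bar u}_{\mathbf{L}^1(\R)}}$ after using $\tau\le T$. Running the same computation with $u^{\nu,-}$ and the maximal backward characteristic gives $0\le\tilde y(t)-\tilde y(\tau)\le C_1(\tau-t)^{1/2}$, and substituting both into the sandwich produces $|x(\tau)-x(t)|\le C_1(\tau-t)^{1/2}$.

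The step I expect to be the main obstacle is the mass-balance inequality for the moving boundary, namely justifying $\int_{y(\tau)}^{\infty}u^{\nu,+}(\tau,x)\,dx\le\int_{y(t)}^{\infty}u^{\nu,+}(t,x)\,dx+\text{(jumps)}-\tfrac12\int_t^{\tau}\dot y(s)^2\,ds$. This is where the entropy admissibility and the generalized-characteristic machinery enter: $u^{\nu,+}$ is only a Kruzkov solution on each step, so the transport computation must be read in the weak sense, the identity $\dot y=u^{\nu,+}$ holds only in the generalized sense of minimal backward characteristics, and the direction of the inequality is dictated by the entropy condition at shocks crossing $y(\cdot)$ (the boundary gains no spurious mass from the left). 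Once this balance is established rigorously, the remaining steps are routine $\mathbf{L}^1$ bookkeeping and a single application of Cauchy--Schwarz.
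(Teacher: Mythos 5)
Your proposal is correct and follows essentially the same route as the paper: splitting into one-signed solutions $u^{\nu,\pm}$, sandwiching $x(\cdot)$ between the minimal and maximal backward characteristics, bounding $\int_t^\tau \dot y(s)^2\,ds$ via the mass balance over the moving half-line (including the $2^{-\nu}Q^{\nu,+}$ jump terms), and concluding with Cauchy--Schwarz. Your closing remark about reading the mass-balance inequality in the weak/entropy sense is a fair point of rigor that the paper itself passes over silently.
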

\quad\\

\noindent
{\bf 4. Lipschitz type estimate with respect to time.} We claim that for any fixed $\delta,R, T>0$ there exist constants $C_{1,\delta},C_{2,\delta}>0$ such that
\bel{Lip-C}
\norm{u^{\nu}(t,\cdot)-u^{\nu}{(s,\cdot)}}_{\mathbf{L}^1([-R,R])}~\leq ~C_{1,\delta}\cdot |t-s|+C_{2,\delta}\cdot 2^{-\nu}
\eeq
for all $t_1\leq\delta\leq s\leq t\leq T$.\\
\quad
\\
Due to \eqref{unudef} and \eqref{L-1G}, we have 
\bel{L-d}
\norm{ u^{\nu}(t_i,\cdot)-u^{\nu}(t_i-,\cdot)}_{\mathbf{L}^1([-R,R])}~\leq~ 2^{-\nu} e^{t_i}\norm{ \bar u}_{\mathbf{L}^1(\R)}\,.
\eeq
On the other hand, for any $s$, $t\in [t_i,t_{i+1}[$, we have following \cite[Theorem 7.10]{HR},
\begin{multline*}
\norm{ u^{\nu}(t,\cdot)-u^{\nu}(s,\cdot)}_{\mathbf{L}^1([-R,R])}~=~\norm{ S_{t-s}^B (u^{\nu}(s))(\cdot)-u^{\nu}(s,\cdot)}_{\mathbf{L}^1([-R,R])}\\
\\
~\leq~\max_{t\in [t_{i}, t_{i+1}]} \big\{\text{Tot.Var.}\{u^\nu (t,\cdot);[-R,R]\}\big\} \max_{t\in[t_{i},t_{i+1}]}\norm{u^{\nu}(t,\cdot)}_{\mathbf{L}^\infty([-R,R])} \vert t-s\vert.
\end{multline*}
\quad\\
\noindent
We are going to establish an upper bound for $\norm{ u^{\nu}(t,\cdot)}_{\mathbf{L}^\infty(\R)}$ and $\text{Tot.Var.} \{u^\nu (t,.);[-R,R]\}$ for $t\in [t_i,t_{i+1}[$. Let $b_i=\frac1{t_i}+2+2t_i+4t_ie^{t_i}\norm{\bar u}_{\mathbf{L}^1(\R)}$.
Combining \eqref{L-1-A}, \eqref{Pos-D}, and \eqref{L-I-A} then yields
\begin{equation*}
\norm{u^\nu(t,\cdot)}_{\mathbf {L}^\infty(\R)}~\leq~\sqrt{2 b_i \norm{u^\nu(t,.)}_{\mathbf{L}^1(\R)}}~\leq~\sqrt{2 
b_i e^t\norm{\bar u}_{\mathbf{L}^1(\R)}} \quad \text{ for all } t\in[t_i,t_{i+1}[.
\end{equation*}
Thus it is left to establish an upper bound for the total variation. Observe first that \eqref{Pos-D} implies that 
the function $u^{\nu}(t,x)-b_i x$ is decreasing. Then we have, for $t\in[t_i,t_{i+1}[$, 
\begin{align*}
\text{Tot.Var.}\{u^{\nu}(t,\cdot);[-R,R]\}&~\leq~ \text{Tot.Var.}\{u^{\nu}(t,\cdot)-b_i \cdot ;[-R,R]\}+\text{Tot.Var.}\{b_i \cdot :[-R,R]\}\\
&~=~u^{\nu}(t, -R)+b_i R-u^{\nu}(t,R)+b_i R+2b_i R\\
&~=~ u^{\nu}(t,-R)-u^{\nu}(t,R)+4b_i R\\
&~\leq~ 2\sqrt{2 b_i e^t\norm{\bar u}_{\mathbf{L}^1(\R)}}+4b_i R.
\end{align*}
Thus for all $s,t\in [t_i,t_{i+1}[$,
\begin{equation}\label{L-e}
\norm{u^\nu(t,\cdot)-u^\nu(s,\cdot)}_{\mathbf{L}^1([-R,R])}\leq (6b_ie^t\norm{\bar u}_{\mathbf{L}^1(\R)}+4b_i^2R^2)\vert t-s\vert.
\end{equation}
\noindent
Combining \eqref{L-d} and \eqref{L-e} finally yields for $t_1\leq \delta\leq s\leq t\leq T$,
\begin{multline*}
\|u^{\nu}(t,\cdot)-u^{\nu}(s,\cdot)\|_{\mathbf{L}^1([-R,R])}\\
 ~\leq~ \Big[6\left(\frac{1}{\lfloor \delta\cdot 2^{\nu}\rfloor \cdot 2^{-\nu}}+2+2T+4Te^T\norm{\bar u}_{\mathbf{L}^1(\R)}\right) e^T\norm{\bar u}_{\mathbf{L}^1(\R)}\\
\qquad +4\left(\frac{1}{\lfloor \delta\cdot 2^{\nu}\rfloor \cdot 2^{-\nu}}+2+2T+4Te^T\norm{\bar u}_{\mathbf{L}^1(\R)}\right)^2 R^2\\
+e^T\norm{\bar u}_{\mathbf{L}^1(\R)}\Big]\cdot \vert t-s\vert + 2^{-\nu}e^T\norm{\bar u}_{\mathbf{L}^1(\R)}.
\end{multline*}
\quad\\
\noindent 
{\bf 5. Tightness property.} We are establishing a {\it Tightness Property} for the sequence $u^{\nu}(t,x)$. Namely, given $\ve>0$ and $T>0$, there exists $L(T)>0$ such that 
\bel{TN}
\int_{|x|>L(T)}|u^{\nu}(t,z)|~dz~\leq~\ve\qquad\text{for all } t\in [0,T[~,\nu\geq 1\,.
\eeq

\quad\\
\noindent
To prove \eqref{TN} we are going to use a comparison argument. Given $\bar u\in L^1(\R)$, let $C_T=\sqrt{T}\cdot\sqrt{2(1+T)e^T\norm{ \bar u}_{L^1(\R)}}$ and consider any approximating solution $u^{\nu}(t,x)$ constructed by the flux splitting. By induction we define the sequence of radii $(R_i)_{i\geq 1}$ as follows.
\begin{enumerate}
\item  The radius $R_1$ is chosen so that 
\begin{equation*}
\int_{|x|\geq R_1-C_T} |\bar u(x)| dx~\leq~\frac12.
\end{equation*}
\item If $R_{i-1}$ is given, we choose $R_i$ in such a way that 
\begin{equation*}
\int_{|x|\geq R_i-C_T} |\bar u(x)|dx~\leq ~2^{-i}
\end{equation*}
and 
\begin{equation*}
R_i-R_{i-1}~\geq~(i+1)\ln (2)+2\sqrt{T}\sqrt{2(1+T)e^T\norm{\bar u}_{\mathbf{L}^1(\R)}}
\end{equation*}
\end{enumerate}
\quad\\
\noindent
Given the approximating solution $u^{\nu}(t,x)$, we denote by $R_i^{\pm}(t)$ the minimal and maximal backward characteristics through the point $(t,x)=(T,\pm R_i)$. For each $t\in [0,T]$, we define
\begin{equation*}
H_0(t)~=~\{ u\in\mathbf{L}^1(\R)\mid Supp(u)\subset [R_1^-(t), R_1^+(t)]\}
\end{equation*}
and 
\begin{equation*}
H_i(t)~=~\{ u\in\mathbf{L}^1(\R) \mid Supp(u)\subset [R_{i+1}^-(t), R_{i}^-(t)]\cup [R_{i}^+(t), R_{i+1}^+(t)]\} \quad \text{ for } i\geq 1. 
\end{equation*}
Furthermore for each $i\geq 1$, let
\[ K_i^-(t)~\doteq~H_0(t)\oplus H_1(t)\oplus H_2(t)\oplus\cdots\oplus H_{i-1}(t)\,,\qquad
K_i^+(t)~\doteq~H_i(t)\oplus H_{i+1}(t)\oplus\cdots \] 
with orthogonal projections
\[ \pi_i^-(t):\mathbf{L}^1(\R)\mapsto K_i^-(t)\,,\qquad \pi_i^-(t)(u)~=~\begin{cases} u(z), & \quad z\in[R_{i}^-(t), R_{i}^+(t)], \\
0, & \quad \text{ else,}
\end{cases}
\]
and 
\[\pi_i^+(t):{\bf L}^1(\R)\mapsto K_i^+(t)\,, \qquad \pi_i^+(t)(u)~=~\begin{cases} u(z), & \quad z\not\in [R_{i}^-(t), R_{i}^+(t)],\\
0, & \quad \text{ else.}
\end{cases}
\]
Then $K_i^-(t)\oplus K_i^+(t)=\mathbf{L}^1(\R)$ for all $i\geq 1$.
\noindent

Let $a_1(t)=\norm{\bar u}_{\mathbf{L}^1(\R)}e^t$ and define $a_i(t)$ for $i\geq 2$ inductively as the solution to 
\begin{equation*}
\frac{d}{dt}a_i(t)~=~a_{i-1}(t)+2^{-i}\norm{\bar u}_{\mathbf{L}^1(\R)}e^t \quad \text{and} \quad a_i(0)~=~2^{-i}.
\end{equation*}
Then $a_i(t)$ is non-decreasing. Moreover, $A(t)=\sum_{i\geq 1} a_i(t)$ solves
\begin{equation*}
\frac{d}{dt} A(t) ~=~ A(t)+\frac32 \norm{\bar u}_{\mathbf{L}^1(\R)}e^t \quad \text{ and }\quad A(0)=~~\frac12+\norm{\bar u}_{\mathbf{L}^1(\R)}.
\end{equation*}
Thus $A(t)=(\frac12+\norm{\bar u}_{\mathbf{L}^1(\R)}+\frac32\norm{\bar u}_{\mathbf{L}^1(\R)}t)e^t$, and hence to each $\varepsilon, T >0$ there exists $I\geq 1$ such that $a_i(t)\leq \varepsilon$ for all $t\in [0,T]$ and $i\geq I$.
\quad\\
\quad\\
\noindent
Hence if we can show that $p_i(t)=\norm{ \pi_i^+ u^{\nu}(t)}_{\mathbf{L}^1(\R)}$ ($i\geq 1$) satisfies 
\begin{equation}\label{case1}
p_1(t)~\leq ~\norm{\bar u}_{\mathbf{L}^1(\R)}e^t 
\end{equation} 
and
\begin{equation*}
p_i(0)~\leq~ 2^{-i} \quad \text{ and } \quad  p_i(t)~\leq~a_i(t) \text{ for all } i\geq 2, t\in [0,T]\,,
\end{equation*}
the claim follows. 
As far as $p_1(t)$ is concerned we have 
\begin{equation*}
p_1(t)~=~\norm{ u^{\nu}(t)}_{\mathbf{L}^1(\R\backslash [R_1^-(t), R_1^+(t)])}~\leq~ \norm{u^{\nu}(t)}_{\mathbf{L}^1(\R)}~\leq~ \norm{\bar u}_{\mathbf{L}^1(\R)}e^t
\end{equation*}
which proves \eqref{case1}.
By construction we have for $i\geq 2$, 
\begin{equation*}
p_i(0)~=~\int_{\vert x\vert \geq R_i(0)} \vert \bar u(x)\vert~dx~ \leq ~\int_{\vert x\vert~\geq~  R_i-C_T} \vert \bar u(x)\vert~dx ~\leq~ 2^{-i}
\end{equation*}
due to Lemma~\ref{lem:charac}.
Since the curves $R_i^-(t)$ and $R_i^+(t)$ are characteristics, we have 
\begin{equation}\label{estdec}
\frac{d}{dt}~p_i(t)~\leq~0 \quad \text{ for a.e. } t\in[t_{j-1},t_j[.
\end{equation}
On the other hand, for $t_j=j\cdot 2^{\nu}$, we have 
\begin{multline*}
\vert p_i(t_j)-p_i(t_j-)\vert ~\leq~ 2^{-\nu} \norm{ [G_x*u^{\nu}(t_j-,\cdot)]}_{\mathbf{L}^1(\R\backslash[R_i^-(t), R_i^+(t)])}\\
~=~ 2^{-\nu}\norm{ [G_x*(\pi_{i-1}^- u^{\nu}(t_j-)+\pi_{i-1}^+u^{\nu}(t_j-))](\cdot)}_{\mathbf{L}^1(\R\backslash[R_i^-(t), R_i^+(t)])}\\
~\leq~ 2^{-\nu} \norm{\pi_{i-1}^+ u^{\nu}(t_j-,.)}_{\mathbf{L}^1(\R)}+2^{-\nu}\norm{[G_x*\pi_{i-1}^- u^{\nu}(t_j-)]}_{\mathbf{L}^1(\R\backslash [R_i^-(t), R_i^+(t)])}.
\end{multline*}
As far as the first term on the right hand side is concerned we can apply \eqref{estdec}. The second one on the other hand is a bit more challenging,
\begin{multline*}
\| [G_x *\pi_{i-1}^- u^{\nu}(t_j-)] \|_{L^1(\R\backslash [R_i^-(t), R_i^+(t)])}\\
 \leq~ \int_{-\infty}^{R_i^-(t)}\int_{R_{i-1}^-(t)}^{R_{i-1}^+(t)} e^{-\vert x-y\vert }\vert u^{\nu}(t_j-,y)\vert~dy~dx\\
\qquad\qquad\qquad +\int_{R_i^+(t)}^\infty \int_{R_{i-1}^-(t)}^{R_{i-1}^+(t)} e^{-\vert x-y\vert } \vert u^{\nu}(t_j-,y)\vert~dy~dx\\
 \leq~ \int_{R_{i-1}^-(t)}^{R_{i-1}^+(t)} \left(\int_{-\infty}^{R_i^-(t)} e^{-(y-x)} dx +\int_{R_{i}^+(t)}^\infty e^{-(x-y)} dx\right) \vert u^{\nu}(t_j-,y)\vert~dy\\
\qquad\qquad \leq~ \int_{R_{i-1}^-(t)}^{R_{i-1}^+(t)}\Big(e^{-(R_{i-1}^-(t)-R_i^-(t))}+e^{-(R_i^+(t)-R_{i-1}^+(t))}\Big)\vert u^{\nu}(t_j-,y)\vert~dy\\
 \leq~\norm{u^{\nu}(t_j-,\cdot)}_{\mathbf{L}^1(\R)} \Big(e^{-(R_{i-1}^-(t)-R_i-(t))}+e^{-(R_i^+(t)-R_{i-1}^+(t))}\Big)\,.
\end{multline*}
\quad\\
\noindent
Now we can use the estimates for the minimal and maximal backward characteristics to conclude the proof.
We have 
\begin{equation*}
-\big(R_{i}^+(t)-R_{i-1}^+(t)\big)~\leq~2\sqrt{T}\sqrt{2(1+T)e^T\norm{\bar u}_{L^1(\R)}}-\big(R_i-R_{i-1}\big)
\end{equation*}
and 
\begin{equation*}
-\big(R_{i-1}^-(t)-R_i^-(t)\big)~\leq~ 2\sqrt{T}\sqrt{2(1+T)e^T \norm{\bar u}_{L^1(\R)}}-\big(R_i-R_{i-1}\big).
\end{equation*}
Thus 
\begin{align*}
\norm{ [G_x*\pi_{i-1}^- u^{\nu}(t_j-)]}_{\mathbf{L}^1(\R\backslash [R_i^-(t), R_i^+(t)])}
&~\leq~ 2^{-i}\cdot \norm{u^{\nu}(t_j-,.)}_{\mathbf{L}^1(\R)} \\  
&~\leq~ 2^{-i}\cdot \norm{u^{\nu}(t_{j-1},.)}_{\mathbf{L}^1(\R)}\\
&~\leq~ 2^{-i}\cdot e^{t_{j-1}} \norm{\bar u}_{\mathbf{L}^1(\R)}
\end{align*}
according to \eqref{unudef} and \eqref{L-1B},
and 
\begin{align*}
\vert p_i(t_j)-p_i(t_j-)\vert &~\leq~ 2^{-\nu}\cdot [p_{i-1}(t_{j-1})+2^{-i}e^{t_{j-1}}\norm{\bar u}_{\mathbf{L}^1(\R)}]\\
&~\leq ~2^{-\nu}\cdot [a_{i-1}(t_{j-1})+2^{-i} e^{t_{j-1}} \norm{\bar u}_{\mathbf{L}^1(\R)}]\\
&~\leq~ \int_{t_{j-1}}^{t_j}  a_{i-1}(t)+2^{-i}e^t\norm{\bar u}_{\mathbf{L}^1(\R)} dt.
\end{align*}
This means in particular, together with \eqref{estdec}, that 
\[
\int_{\big\{x<R_i^-(t)\}\cup \big\{x>R_i^+(t)\big\}} \vert u^\nu(t,x)\vert dx ~\leq~ a_i(t).
\]
However, Lemma~\ref{lem:charac} yields 
\begin{equation*}
\vert R_i^+(t)-R_i\vert~\leq~ C_T\quad\mathrm{and}\quad \vert R_i^-(t)+R_i\vert ~\leq~ C_T \quad \text{ for all } t\in [0,T], i\geq 1.
\end{equation*}
Given $\varepsilon >0$ and $T>0$, we choose $i$ such that $a_i(T)\leq \varepsilon$ and hence $a_i(t)\leq \varepsilon$ for all $t\in [0,T]$. Choosing $L(T)=R_i+C_T$ finishes the proof of \eqref{TN}. 

\section{Existence and uniqueness of weak entropy solutions}
\label{sec:3}

After introducing the approximating sequence $\{u^{\nu}(t,x)\}_{\nu\in\mathbb{N}}$ and deriving some a priori estimates in the last section, we are going to establish the existence and uniqueness of weak entropy solutions, i.e. Theorem~\ref{thm1}. 

\begin{proof}[Proof of Theorem~\ref{thm1}] 
\quad\\
\quad\\
\noindent
{\bf 1. Existence of a limiting function.} Let $\{u^{\nu}(t,x)\}_{\nu\in\mathbb{N}}$ be the sequence of  approximating solutions constructed in Section~\ref{sec:2}. In addition, we introduce a new sequence $\{ \tilde u^\nu(t,x)\}_{\nu\in\mathbb{N}}$, by defining
\begin{equation}\label{defutilde}
\tilde u^\nu(t,\cdot):= (1-\theta_t)\cdot u^\nu(t_i, \cdot)+\theta_t\cdot u^\nu(t_{i+1},\cdot)\, \quad \text{for all } t\in[t_i,t_{i+1}[,
\end{equation}
where $\theta_t\in[0,1[$ such that $t=(1-\theta_t)\cdot t_i+\theta_t \cdot t_{i+1}$.
In contrast to $u^{\nu}(t,x)$ the function $\tilde u^{\nu}(t,x)$ satisfies 
\[
\tilde u^{\nu}(t_i,x)~=~\tilde u^{\nu}(t_i-,x), \quad \text{ for all }i=1,2,\dots,
\]
 a property which plays a crucial role in establishing the existence of a convergent subsequence. 
To this end we are going to apply \cite[Theorem A.8]{HR}, which we state here, in a slightly modified version, for the sake of completeness.
\begin{theorem}\label{thmsubseq}
Let $\tilde u^{\nu}:[\delta, \infty)\times\R\to \R$ be a family of functions such that for each $T>\delta$, 
\bel{A83}
\vert \tilde u^{\nu}(t,x)\vert~\leq~C_T, \qquad (t,x)\in [\delta, T]\times \R\,,
\eeq
for a constant $C_T$ independent of $\nu$. Assume in addition for all compact $B\subset \R$ and for $t\in [\delta,T]$ that 
\bel{A81}
\sup_{\vert \xi\vert \leq \vert \rho\vert }\int_B \vert \tilde u^{\nu}(t,x+\xi)-\tilde u^{\nu}(t,x)\vert dx~\leq~v_{B,T}(\vert \rho\vert),
\eeq
for a modulus of continuity $v_{B,T}$. Furthermore, assume for $s$ and $t$ in $[\delta,T]$ that 
\bel{A82}
\int_B \vert \tilde u^{\nu}(t,x)- \tilde u^{\nu}(s,x)\vert dx~\leq~w_{B,T}(\vert t-s\vert) \quad \text{as } \nu\to \infty,
\eeq
for some modulus of continuity $w_{B,T}$. Then there exists a sequence $\nu_j\to\infty$ such that for each $t\in [\delta,T]$ the sequence $\{\tilde u^{\nu_j}(t,\cdot)\}$ converges to a function $u(t,\cdot)$ in $\mathbf{L}^1_{loc}(\R)$. The convergence is in $C([\delta,T]; \mathbf{L}^1_{loc}(\R))$.
\end{theorem}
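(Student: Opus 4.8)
The plan is to prove Theorem~\ref{thmsubseq} as an Arzel\`a--Ascoli theorem for functions taking values in the Fr\'echet space $\mathbf{L}^1_{loc}(\R)$, which I metrize by $d(f,g)=\sum_{m\geq 1}2^{-m}\min\{1,\norm{f-g}_{\mathbf{L}^1([-m,m])}\}$ and which is then a complete metric space. The argument splits into three stages. First, for each fixed time $t$ I will establish precompactness of $\{\tilde u^{\nu}(t,\cdot)\}_{\nu}$ in $\mathbf{L}^1_{loc}(\R)$ by means of the Kolmogorov--Riesz--Fr\'echet compactness criterion. Second, a diagonal extraction over a countable dense set of times will yield a single subsequence converging at every time of that set. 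Third, the temporal equicontinuity \eqref{A82} will upgrade this pointwise-in-time convergence to uniform-in-time convergence on each compact time interval, which is precisely convergence in $C([\delta,T];\mathbf{L}^1_{loc}(\R))$.

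For the precompactness step I would fix $T>\delta$, a compact interval $B=[-m,m]$, and a time $t\in[\delta,T]$. The uniform bound \eqref{A83} gives $\norm{\tilde u^{\nu}(t,\cdot)}_{\mathbf{L}^1(B)}\leq 2m\,C_T$ independently of $\nu$, while \eqref{A81} provides a uniform-in-$\nu$ modulus of continuity for translations in $\mathbf{L}^1(B)$. The Kolmogorov--Riesz--Fr\'echet theorem then shows that $\{\tilde u^{\nu}(t,\cdot)\}_{\nu}$ is relatively compact in $\mathbf{L}^1(B)$ for every $m$, hence relatively compact in $\mathbf{L}^1_{loc}(\R)$. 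Taking a countable dense set $D=\{\tau_k\}_{k\geq1}\subset[\delta,\infty)$ and extracting a convergent subsequence successively at $\tau_1,\tau_2,\dots$, a standard diagonal argument produces one sequence $\nu_j\to\infty$ with $\tilde u^{\nu_j}(\tau_k,\cdot)\to u(\tau_k,\cdot)$ in $\mathbf{L}^1_{loc}(\R)$ for every $k$.

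To pass to arbitrary times, I would fix $t\in[\delta,T]$ and a compact $B$ and, for $\tau_k\in D$, use the split
\[
\norm{\tilde u^{\nu_j}(t,\cdot)-\tilde u^{\nu_l}(t,\cdot)}_{\mathbf{L}^1(B)}\leq \norm{\tilde u^{\nu_j}(t,\cdot)-\tilde u^{\nu_j}(\tau_k,\cdot)}_{\mathbf{L}^1(B)}+\norm{\tilde u^{\nu_j}(\tau_k,\cdot)-\tilde u^{\nu_l}(\tau_k,\cdot)}_{\mathbf{L}^1(B)}+\norm{\tilde u^{\nu_l}(\tau_k,\cdot)-\tilde u^{\nu_l}(t,\cdot)}_{\mathbf{L}^1(B)}.
\]
Given $\varepsilon>0$, density lets me choose $\tau_k$ with $|t-\tau_k|$ small enough that \eqref{A82} bounds the outer two terms, while the middle term vanishes as $j,l\to\infty$ by the previous step. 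Thus $\{\tilde u^{\nu_j}(t,\cdot)\}_j$ is Cauchy in $\mathbf{L}^1_{loc}(\R)$ and defines $u(t,\cdot)$. The same three-term device, now comparing two nearby times, shows simultaneously that $t\mapsto u(t,\cdot)$ inherits the modulus $w_{B,T}$ and hence is continuous, and that the convergence $\tilde u^{\nu_j}\to u$ is uniform in $t\in[\delta,T]$, i.e. in $C([\delta,T];\mathbf{L}^1_{loc}(\R))$.

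The step I expect to be most delicate is the precise use of the temporal hypothesis \eqref{A82}: the qualifier ``as $\nu\to\infty$'' indicates that the time-equicontinuity is only \emph{asymptotic} in $\nu$ (in the application \eqref{Lip-C} carries a genuinely vanishing error $C_{2,\delta}\,2^{-\nu}$), whereas the Arzel\`a--Ascoli completion formally wants equicontinuity uniform in $\nu$. I would handle this by absorbing the $O(2^{-\nu})$ contribution into the Cauchy estimate above, for instance by taking $\limsup_{j,l}$ so that the vanishing term is discarded and only the honest modulus $w_{B,T}(|t-\tau_k|)$ controls the outer terms. Once this point is managed, the two remaining ingredients — the Kolmogorov--Riesz--Fr\'echet criterion and the diagonal/Arzel\`a--Ascoli passage — are entirely standard.
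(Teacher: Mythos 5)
Your proof is correct, but note that the paper itself never proves this theorem: it is stated, in slightly modified form, as \cite[Theorem A.8]{HR}, and the proof is deferred to that reference. Your argument --- Kolmogorov--Riesz--Fr\'echet compactness of $\{\tilde u^{\nu}(t,\cdot)\}_\nu$ in $\mathbf{L}^1(B)$ at each fixed time (from the uniform bound \eqref{A83} and the translation modulus \eqref{A81}), a diagonal extraction over a countable dense set of times, and the three-term Arzel\`a--Ascoli upgrade via the temporal modulus \eqref{A82} --- is precisely the standard proof of the cited result, so in effect you have supplied the argument the paper outsources. The one feature that genuinely departs from the classical statement is the qualifier ``as $\nu\to\infty$'' in \eqref{A82}, reflecting that in the application the time-equicontinuity carries an additive error $C_{2,\delta}\,2^{-\nu}$ (see \eqref{Lip-C}); you correctly flagged this as the delicate point, and your device of passing to $\limsup_{j,l\to\infty}$ in the Cauchy estimate disposes of it, since vanishing of that $\limsup$ is exactly the Cauchy property and only finitely many indices are ever discarded, so the same trick also survives the uniform-in-$t$ step over a finite net of times. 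Two further points are glossed over but are harmless and standard: applying the compactness criterion on a compact $B$ requires working on a slightly larger interval (which \eqref{A81}, holding for all compact sets, permits), and obtaining one subsequence good for every $T>\delta$ and every $B=[-m,m]$ requires one more nested diagonalization, e.g.\ over $T_n=\delta+n$, on top of the metric $d$ you introduced.
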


We start by checking that all the assumptions in the above theorem are fulfilled for the sequence $\{\tilde u^{\nu}(t,x)\}$. Without loss of generality, we assume that $\nu$ satisfies $2^{-\nu}\leq \delta\leq 2\cdot \lfloor  2^{\nu}\cdot \delta\rfloor \cdot 2^{-\nu}$.

\eqref{A82}: It suffices to show that for any fixed $\delta, R, T>0$, there exists a constant $L_{\delta}$ such that 
\bel{bd2}
\norm{\tilde{u}^{\nu}(t,\cdot)-\tilde{u}^{\nu}(s,\cdot)}_{L^1([-R,R])}\leq L_{\delta} |t-s|,\qquad \delta\leq s\leq t \leq T\,.
\eeq
Let $s,t\in [\delta, T]$ such that $s=(1-\theta_s)t_i+\theta_s t_{i+1}$ and $t=(1-\theta_t)t_i+\theta_t t_{i+1}$, then $\vert t-s\vert = 2^{-\nu}\vert \theta_t-\theta_s\vert$ and 
\begin{align*}
\norm{\tilde u^{\nu}(t,\cdot)-\tilde u^{\nu}(s,\cdot)}_{L^1([-R,R])}&~\leq ~\vert \theta_t-\theta_s\vert \norm{ u^{\nu}(t_{i+1},\cdot)-u^{\nu}(t_i,\cdot)}_{L^1([-R,R])}\\
&~\leq~ \vert \theta_t-\theta_s\vert (C_{1,\delta}\vert t_{i+1}-t_i\vert +C_{2,\delta} 2^{-\nu})\\
&~= ~\vert \theta_t-\theta_s\vert 2^{-\nu} (C_{1,\delta}+C_{2,\delta})\\
& ~ =~ \vert t-s\vert (C_{1,\delta}+C_{2,\delta})
\end{align*}
where we used \eqref{Lip-C}.
In the general case, $s,t\in[\delta, T]$ with $s=(1-\theta_s)t_i+\theta_s t_{i+1}<t=(1-\theta_t)t_j+\theta_t t_{j+1}$ and $i\not = j$ one obtains 
\begin{align*}
\norm{\tilde u^{\nu}(t,\cdot)-\tilde{u}^{\nu}(s,\cdot)}_{\mathbf{L}^1([-R,R])}& ~\leq~ \norm{\tilde u^{\nu}(t,\cdot)-\tilde{u}^{\nu}(t_j,\cdot)}_{\mathbf{L}^1([-R,R])}\\
&\quad +~\sum_{i+1\leq k\leq j-1} \norm{ \tilde u^{\nu}(t_{k+1},\cdot)-\tilde u^{\nu}(t_k,\cdot)}_{\mathbf{L}^1([-R,R])}\\
& \quad +~\norm{ \tilde u^{\nu}(t_{i+1},\cdot)-\tilde u^{\nu}(s,\cdot)}_{\mathbf{L}^1([-R,R])}\\
& ~\leq~ \vert t-s\vert (C_{1,\delta}+C_{2,\delta}).
\end{align*}
Thus choosing $L_{\delta}=C_{1,\delta}+C_{2,\delta}$ yields \eqref{bd2} and hence \eqref{A82}.

\eqref{A83}: Observe that due to \eqref{Pos-D}, we have for $t=(1-\theta_t)t_i+\theta_t t_{i+1}$ and all $x<y$,
\begin{align}\label{TVest}
\tilde u^{\nu}(t,y)-\tilde u^{\nu}(t,x)
& \leq (1-\theta_t)(u^\nu(t_i,y)-u^\nu(t_i,x)) +\theta_t (u^\nu(t_{i+1},y)-u^\nu(t_{i+1},x))\\ \nn 
&~\leq~ \Big(\frac1{t_i} +2 +2t_{i+1}+4t_{i+1}e^{t_{i+1}}\norm{\bar u}_{\mathbf{L}^1(\R)}\Big)(y-x)\\ \nn
&~ \leq~\Big(\frac{2}{\delta}+2+2(T+1)+4(T+1)e^{T+1}\norm{\bar u}_{\mathbf{L}^1(\R)}\Big) (y-x)\\ \nn 
& ~=~D_T(y-x)
\end{align}
and hence, as in the proof of Lemma~\ref{lem2}, we obtain
\bel{TVest2}
\norm{\tilde u^{\nu}(t,x)}_{\mathbf{L}^\infty(\R)}~\leq~ \sqrt{2 \Big(\frac{2}{\delta}+2+2(T+1)+4(T+1)e^{T+1}\norm{\bar u}_{\mathbf{L}^1(\R)}\Big) e^{T+1}\norm{ \bar u}_{\mathbf{L}^1(\R)}}.
\eeq
Thus choosing $C_T=\sqrt{2 (\frac{2}{\delta}+2+2(T+1)+4(T+1)e^{T+1}\norm{\bar u}_{\mathbf{L}^1(\R)}) e^{T+1}\norm{ \bar u}_{\mathbf{L}^1(\R)}}$ finishes the proof of \eqref{A83}.
\quad \\ 
\eqref{A81}:
Following the proof of \cite[Lemma A.1]{HR} and applying \eqref{TVest} and \eqref{TVest2}, we get
\begin{multline*}
\int_{[-R,R]}\vert \tilde u^{\nu}(t,x+\xi)-\tilde u^{\nu}(t,x)\vert~dx~\leq~ \text{Tot.Var.} \{\tilde u^{\nu}(t,\cdot);[-R-\vert \xi\vert ,R+\vert \xi\vert]\}\cdot  \vert \xi\vert\\
~\leq~ \vert \xi\vert (\text{Tot.Var.} \{\tilde u^{\nu}(t,\cdot)-D_T\cdot; [-R-\vert \xi\vert,R+\vert \xi\vert]\}+\text{Tot.Var.} \{D_T\cdot; [-R-\vert \xi\vert,R+\vert \xi\vert]\})\\
~\leq~ \vert \xi\vert (2\norm{\tilde u^{\nu}(t,\cdot)}_{\mathbf{L}^\infty(\R)}+4D_T(R+\vert \xi\vert))
\end{multline*}
since $\tilde u^{\nu}-D_T$ is decreasing, due to \eqref{TVest}, and hence \eqref{A81} is satisfied.

Thus, Theorem~\ref{thmsubseq} implies that there exists a subsequence $\nu_j\to\infty$ and a limit function $\tilde u:[\delta, T]\times [-R,R]\to \R$ such that 
\bel{L1}
\lim_{j\to\infty}\norm{\tilde{u}^{\nu_j}(t,\cdot)-\tilde{u}(t,\cdot)}_{\mathbf{L}^1([-R,R])}~=~0,\qquad\text{ for all } \delta \leq t\leq T\,,
\eeq
and 
\bel{L1q}
\lim_{j\to\infty}\norm{\tilde{u}^{\nu_j}-\tilde{u}}_{\mathbf{L}^1([\delta,T]\times [-R,R])}~=~0\,.
\eeq

By construction, see \eqref{defutilde}, we have for any $\delta\leq t\leq T$, with $t=(1-\theta_t)t_i+\theta_tt_{i+1}$ that 
\begin{multline*}
\norm{u^{\nu_j}(t,\cdot)-\tilde{u}^{\nu_j}(t,\cdot)}_{\mathbf{L}^1([-R,R])}\\
\leq(1-\theta_t)\norm{u^{\nu_j}(t,\cdot)-u^{\nu_j}(t_i,\cdot)}_{\mathbf{L}^1([-R,R])}+\theta_t\norm{u^{\nu_j}(t,\cdot)-u^{\nu_j}(t_{i+1},\cdot)}_{\mathbf{L}^1([-R,R])}\,.
\end{multline*}
Recalling \eqref{Lip-C}, we obtain 
\bel{L1d}
\norm{u^{\nu_j}(t,\cdot)-\tilde{u}^{\nu_j}(t,\cdot)}_{\mathbf{L}^1([-R,R])}~\leq~ L_{\delta} 2^{-\nu}\,
\eeq
with $L_{\delta}~:=~ C_{1,\delta}+C_{2,\delta}$. Thus combining \eqref{L1}, \eqref{L1q} and \eqref{L1d}, we get
\bel{L2}
\lim_{j\to\infty}\norm{u^{\nu_j}(t,\cdot)-\tilde{u}(t,\cdot)}_{\mathbf{L}^1([-R,R])}~=~0,\qquad\text{for all } \delta \leq t\leq T\,,
\eeq
and
\bel{L2q}
\lim_{j\to\infty}\norm{u^{\nu_j}-\tilde{u}}_{\mathbf{L}^1([\delta,T]\times [-R,R])}~=~0\,.
\eeq
\quad\\
Since \eqref{L2} and \eqref{L2q} hold for any $\delta,T,R>0$,  there exists $I\subset \mathbb{N}$ and $u:[0,\infty)\times\R\to\R$ such that $\{u^{\nu}(t,\cdot)\}_{\nu\in I}\to u(t,\cdot)$ in $\mathbf{L}^1_{loc}(\R)$ for any $t\geq 0$ and $\{u^{\nu}\}_{\nu\in I}\to u$ in $\mathbf{L}^1_{loc}([0,\infty[\times \R)$. Moreover, by the {\it Tightness property}, we have that to any $\varepsilon>0$ there exists $R_\varepsilon>0$ such that 
\[
\int_{\R\backslash[-R_\varepsilon, R_\varepsilon]} \vert u^{\nu}(t,x)\vert~dx~\leq~ \varepsilon\quad \text{for all } t\in[0,T], \nu\in I.
\]
Thus we get for all $R\geq R_\varepsilon$ 
\begin{align*}
\int_{[-R,R]} \vert u(t,x)\vert~dx & ~=~\lim_{\nu\in I\to\infty}\int_{[-R,R]}~\vert u^{\nu}(t,x)\vert~dx \\
& ~\leq ~\lim_{\nu\in I\to\infty}~(\norm{ u^{\nu}(t,.)}_{\mathbf{L}^1(\R)}+ \varepsilon)\\
& ~\leq ~e^T\norm{\bar u}_{\mathbf{L}^1(\R)}+\varepsilon.
\end{align*}
Since the above estimate is uniform, we can conclude that $u(t,.)\in {\bf L}^1(\R)$ for all $t\geq0$. Let $t\in [0,T]$, then there exists $R^1_{\ve}>0$ such that 
\[
\int_{\R\backslash[-R^1_\varepsilon, R^1_\varepsilon]} \vert u(t,x)\vert~dx~\leq~ \varepsilon.
\]
Set $\overline{R}_{\ve}\doteq\max\{R_{\ve},R^1_{\ve}\}$, we then have 
\[
\int_{\R\backslash[-\overline{R}_\varepsilon, \overline{R}_\varepsilon]} \vert u(t,x)-u^{\nu}(t,x)\vert~dx~\leq~ 2\varepsilon\qquad\text{for all } \nu\in I\,,
\]
which implies
\[
\lim_{\nu\in I\to \infty}~\|u(t,\cdot)-u^{\nu}(t,\cdot)\|_{{\bf L}^1(\R)}~\leq~2\ve\,.
\]
Therefore, $u^{\nu}(t,\cdot)$ converges to $u(t,\cdot)$ in ${\bf L}^1(\R)$ for all $t\geq 0$. 
\quad\\
\quad\\
Recalling \eqref{L-1-A} and \eqref{Pos-D}, we have for all $t>0$,
\[
\norm{u(t,\cdot)}_{L^1(\R)}\leq e^{t} \norm{\bar{u}}_{\mathbf{L}^{1}(\R)}\,,
\]
and 
\bel{Pos-Du}
u(t,x_2)-u(t,x_1)~\leq~\Big[\frac{1}{t}+2+2 t+4te^{t}\norm{\bar{u}}_{\mathbf{L}^1(\R)}\Big](x_2-x_1),\quad\text{for all }x_1<x_2\,.
\eeq 
In particular,
\[
\|u(t,\cdot)\|_{{\bf L}^{\infty}(\R)}~\leq~\sqrt{2e^t\cdot \Big[\frac{1}{t}+2+2 t+4te^{t}\norm{\bar{u}}_{\mathbf{L}^1(\R)}\Big]\cdot \norm{\bar{u}}_{\mathbf{L}^{1}(\R)}},\quad\text{for all } t>0\,,
\]
and thus $u$ is in ${\bf L}^{\infty}_{loc}(]0,\infty[, {\bf L}^{\infty}(\R))$.
\\
\quad\\
\noindent
{\bf 2. The map $t\to u(t,\cdot)$ is continuous from $[0,T[$ to $\mathbf{L}^1(\R)$.} 
By the {\it Tightness Property} for $\{ u^\nu(t,x)\}_{\nu\in\mathbb{N}}$, we have that to any $\ve>0$, there exists $R_{\ve}>0$ such that 
\begin{equation}\label{L2QQ}
\norm{u^{\nu}(t,\cdot)}_{\mathbf{L}^1(\R\backslash [-R_{\ve}, R_{\ve}])}\leq \ve \qquad  \text{for all }t\in[0,T[, \nu\in \mathbb{N}.
\end{equation}
Thus for $R_\ve$ big enough \eqref{L2q} and \eqref{L2QQ} imply for $t\in(\delta,T)$ that 
\begin{align*}
\norm{u(t,\cdot)}_{\mathbf{L}^1(\R\backslash [-R_{\ve}, R_{\ve}])}&~=~ \norm{u(t,\cdot)}_{\mathbf{L}^1(\R)}-\norm{u(t,\cdot)}_{\mathbf{L}^1([-R_{\ve}, R_{\ve}])}\\
& ~\leq ~ \lim_{\nu\in I \to\infty} (\norm{u^{\nu}(t,.)}_{\mathbf{L}^1(\R)} +\varepsilon-\norm{u^{\nu}(t,.)}_{\mathbf{L}^1([-R_\ve, R_\ve])})\\
& ~ \leq ~\lim_{\nu\in I \to\infty} (\norm{u^{\nu}(t,.)}_{\mathbf{L}^1(\R\backslash[-R_{\ve},R_{\ve}])}+\varepsilon)\\
&~ \leq ~2\varepsilon.
\end{align*}
Therefore, for any fixed $\delta >0$ and for any $s,t\in(\delta, T)$ we obtain
\begin{eqnarray*}
\norm{u(t,\cdot)-u(s,\cdot)}_{\mathbf{L}^1(\R)}&\leq&\norm{u(t,\cdot)-u(s,\cdot)}_{\mathbf{L}^1([-R_\varepsilon,R_\varepsilon])}+2\ve\\
\\
&=&\lim_{\nu\in I \to\infty}\norm{u^{\nu}(t,\cdot)-u^{\nu}(s,\cdot)}_{\mathbf{L}^1([-R_\varepsilon,R_\varepsilon])}+2\ve\\
\\
&\leq&C_{1,\delta}\cdot |t-s|+2\ve\,,
\end{eqnarray*}
where we applied \eqref{Lip-C} in the last step. 
This implies that $u(t,\cdot)$ is continuous from $(0,T)$ to $\mathbf{L}^1(\R)$. 

On the other hand, the continuity also holds at $t=0$, i.e.,
\bel{limit-0}
\lim_{t\to 0^+} \norm{u(t,\cdot)-\bar{u}}_{\mathbf{L}^1(\R)}~=~0\,.
\eeq
Indeed, for any $\nu\in\mathbb{N}$ and $t>0$, we have
\begin{multline*}
\|S^B_{t-t_{i}}(u^{\nu}(t_i))(\cdot)-S^B_{t-t_{i-1}}(u^{\nu}(t_{i-1}))(\cdot)\|_{\mathbf{L}^1(\R)}\\
~=~\norm{S^B_{t-t_{i}}(u^{\nu}(t_i))(\cdot)-S^B_{t-t_{i}}(u^{\nu}(t_{i}-))(\cdot)}_{\mathbf{L}^1(\R)}\\
~\leq~\norm{u^{\nu}(t_{i},\cdot)-u^{\nu}(t_{i}-,\cdot)}_{L^1(\R)}\leq2^{-\nu}e^{t}\norm{\bar{u}}_{\mathbf{L}^1(\R)}\,
\end{multline*}
for all $i\in \{1,2,\dots, \lfloor 2^{\nu}\cdot t\rfloor\}$. Thus,
\begin{align*}
\norm{u^{\nu}(t,\cdot)-S^B_{t}(\bar{u})(\cdot)}_{\mathbf{L}^1(\R)}& ~\leq~\sum^{\lfloor 2^{\nu}\cdot t\rfloor\}}_{i=1}\norm{S^B_{t-t_{i}}(u^{\nu}(t_i))(\cdot)-S^B_{t-t_{i-1}}(u^{\nu}(t_{i-1}))(\cdot)}_{\mathbf{L}^1(\R)}\\ 
& ~\leq~ te^{t}\norm{\bar{u}}_{\mathbf{L}^1(\R)}\,.
\end{align*}
Since $u^{\nu}(t,\cdot)$ converges to $u(t,\cdot)$ in ${\bf L }^1(\R)$ for all $t\geq 0$, we obtain that
\[
\|u(t,\cdot)-S_t^{B}(\bar{u})(\cdot)\|_{{\bf L}^1(\R)}~\leq~te^{t}\norm{\bar{u}}_{\mathbf{L}^1(\R)}\,
\]
and in particular, 
\[
\lim_{t\to 0^+}~\norm{u(t,\cdot)-S^B_{t}(\bar{u})}_{\mathbf{L}^1(\R)}~=0\,.
\]
Therefore, \eqref{limit-0} follows from the continuity of Burgers semigroup $S^B_t$ at time $t=0$.
\quad\\
\quad\\
{\bf 3. Weak entropy condition.} We show that $u(t,x)$ satisfies the entropy condition (\ref{entropy}). Let $\eta(u)=\vert u-k\vert $ and $q(u)=\mathrm{sign}(u-k)\left( \frac{u^2}{2}-\frac{k^2}{2}\right)$. In addition define 
$\eta_\delta(u)=\sqrt{(u-k)^2+\delta^2}$ and denote by $q_\delta(u)$ the solution to 
\begin{equation}
q_\delta'(u)=\eta_\delta'(u)u \qquad q_\delta(k)=0.
\end{equation}
Then we get for any $\phi\in C_1^c(]0,\infty[\times \R, \R)$, since $u(t,x)$ is uniformly bounded on the support of $\phi$ according to \eqref{Pos-Du}, that 
\begin{align*}
 \iint [\vert u-k\vert \phi_t & + \mathrm{sign}(u-k)\left(\frac{u^2}{2}-\frac{k^2}{2}\right) \phi_x]dxdt\\ 
& = \iint [ \eta(u)\phi_t +q(u)\phi_x]dx dt \\ 
& = \lim_{\delta\to 0} \iint [\eta_\delta(u)\phi_t+q_\delta(u)\phi_x] dxdt\\ 
& = \lim_{\delta\to 0}\lim_{j\to\infty} \iint [\eta_\delta(u^{\nu_j})\phi_t+q_{\delta}(u^{\nu_j})\phi_x] dxdt \\
& = \lim_{\delta\to 0}\lim_{j\to\infty} \sum_{i=0}^\infty \int_{t_i}^{t_{i+1}} \int_\R [\eta_\delta(u^{\nu_j})\phi_t+q_{\delta}(u^{\nu_j})\phi_x] dxdt \\
&\geq \lim_{\delta\to 0} \lim_{j\to\infty} \sum_{i=0}^\infty \int_\R [\eta_\delta(u^{\nu_j}(t_{i+1}-,x))\phi(t_{i+1},x)-\eta_\delta(u^{\nu_j}(t_i+,x))\phi(t_i,x) ]dx\\
& =\lim_{\delta\to 0}\lim_{j\to\infty}\sum_{i=1}^\infty \int_\R [\eta_\delta (u^{\nu_j}(t_{i}-,x))-\eta_\delta(u^{\nu_j}(t_i+,x))] \phi(t_i,x) dx\\
& = \lim_{\delta\to 0} \iint \frac{-(u(t,x)-k)[G_x*u(t)](x)}{\sqrt{(u-k)^2+\delta^2}}\phi(t,x) dxdt \\ 
& =-\iint\mathrm{sign}(u(t,x)-k) [G_x*u(t)](x)\phi(t,x)dx dt.
\end{align*}
\quad\\
{\bf 4. Lipschitz continuity with respect to time.} Let $u_1,u_2$ be weak entropy solutions of \eqref{BP} with $u_1(0,\cdot)=\bar{u}_1(\cdot)$ and $u_2(0,\cdot)=\bar{u}_2(\cdot)$, respectively. We will prove that 
\bel{Stabi}
\norm{u_2(t,\cdot)-u_1(t,\cdot)}_{\mathbf{L}^1(\R)}\leq e^{t}\cdot \norm{\bar{u}_2-\bar{u}_1}_{\mathbf{L}^1(\R)},\quad\text{for all } t>0\,,
\eeq
which implies the uniqueness of the weak entropy solution to \eqref{BP}-\eqref{id}.

Since $t\to u_i(t,\cdot)$ is continuous with values in $L^1(\R)$, for every $\ve>0$ there exists $t_{\ve}>0$ such that 
\[
\norm{u_i(t_{\ve},\cdot)-\bar{u}_i(\cdot)}_{\mathbf{L}^1(\R)}\leq\ve\qquad i\in\{1,2\}\,.
\]
This implies that 
\bel{near 0}
\norm{u_2(t_{\ve},\cdot)-u_1(t_{\ve},\cdot)}_{\mathbf{L}^1(\R)}\leq2\ve+\norm{\bar{u}_2-\bar{u}_1}_{\mathbf{L}^1(\R)}\,.
\eeq
\quad\\
On the other hand, since $u_1(t,x)$ and $u_2(t,x)$ are weak entropy solutions, for any $T>0$ there exists $M_T>0$ such that for any $ t\in [t_{\ve},T]$ it holds
\[
\norm{u_i}_{\mathbf{L}^{\infty}([t_{\ve},T]\times \R)}\leq M_T,\qquad i\in \{1,2\}\,.
\]
Therefore, one can follow the argument in the proof of \cite[Theorem 6.2]{B} to show that for all $t_{\ve}\leq s\leq t\leq T$
\begin{multline*}
\norm{u_2(t,\cdot)-u_{1}(t,\cdot)}_{\mathbf{L}^1(\R)}\\~\leq~ \norm{u_2(s,\cdot)-u_{1}(s,\cdot)}_{\mathbf{L}^1(\R)}+\int^t_{s} \norm{G_x*(u_2(\tau,\cdot)-u_1(\tau,\cdot))}_{\mathbf{L}^1(\R)} d\tau\\
\\
~\leq~ \norm{u_2(s,\cdot)-u_{1}(s,\cdot)}_{\mathbf{L}^1(\R)}+\int_s^t\norm{u_2(\tau,\cdot)-u_1(\tau,\cdot)}_{\mathbf{L}^1(\R)}d\tau\,.
\end{multline*}
Thus, the function $Z(t):= \norm{u_2(t,\cdot)-u_1(t,\cdot)}_{\mathbf{L}^1(\R)}$ satisfies the integral inequality 
\[
Z(t)\leq Z(t_{\ve})+\int^t_{t_\ve}Z(\tau) d\tau,\qquad\text{for all } t_{\ve}\leq t\leq T\,.
\]
Applying Gronwall's inequality, we get
\[
Z(t)\leq e^{(t-t_{\ve})} Z(t_{\ve}),\qquad\text{for all } t_{\ve}\leq t\leq T\,.
\]
Recalling \eqref{near 0}, we finally obtain that 
\[
Z(t)\leq e^{(t-t_{\ve})} \big[Z(0)+2\ve\big] \leq e^t  [Z(0)+2\ve]\qquad\text{for all } \ve>0\,,
\]
which yields \eqref{Stabi}\,.
\end{proof}

\section{Local smoothness and wave breaking}

In this final section we want to focus on the prediction of wave breaking. In particular, we are interested in identifying for initial data $\bar u(\cdot)\in \mathbf{C}^1(\R)\cap {\bf L}^1(\R)$ if wave breaking occurs in the nearby future or not by following solutions along characteristics as long as they exist in the classical sense.

\begin{theorem}\label{WB}
Let $u(t,x)$ be the weak entropy solution of \eqref{BP} with $u(0,\cdot)=\bar u(\cdot)\in \mathbf{C}^1(\R)\cap {\bf L}^1(\R)$. Denote by $x(t)$ the characteristic through $\bar{x}$, i.e., 
$$x'(t)=u(t,x(t)) \quad \mathrm\quad x(0)=\bar{x}\,.$$
Then the following statements hold.
\begin{itemize}
\item [(i)] $u_x(t,x(t))$ remains bounded for all $t\in [0,T_*[$ where 
\bel{lowb}
T_*~=~\ln\left(1+\frac{1}{\sqrt{|\bar u(\bar x)|+2\norm{\bar u}_{{\bf L}^1(\R)}}}\left( \frac{\pi}{2}+\arctan\left(\frac{\bar u'(\bar x)}{\sqrt{\vert \bar u(\bar x)\vert+2\norm{\bar u}_{{\bf L}^1(\R)}}}\right)\right)\right)\,.
\eeq
\item [(ii)] If
\bel{condblowup}
\bar u'(\bar{x})~<~-\frac{1}{2}-\sqrt{\vert \bar u(\bar{x})\vert +2\norm{\bar u}_{\mathbf{L}^1(\R)}+\frac14}\,,
\eeq
then $u_x(t,x(t))$ becomes unbounded before time $T^*$ where
\bel{upbound}
T^*~=~ \frac{2}{\Big|2\bar u'(\bar{x})+1+2\cdot \sqrt{\vert \bar u(\bar x)\vert +2\norm{\bar u}_{{\bf L}^1(\R)}+\frac14} \Big|}\,.
\eeq
\end{itemize}
\end{theorem}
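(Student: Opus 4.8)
The plan is to follow the solution along the characteristic $x(t)$ and derive a scalar Riccati-type ODE for $w(t):=u_x(t,x(t))$. Differentiating \eqref{BP} in $x$ gives $u_{tx}+u_x^2+uu_{xx}=[G*u]_{xx}$, and since $G$ is the Green's function of $\partial_x^2-\mathrm{id}$ we have the distributional identity $G_{xx}-G=\delta_0$, so that $[G*u]_{xx}=u+G*u$. Combining this with $\tfrac{d}{dt}u_x(t,x(t))=u_{tx}+u\,u_{xx}$ (because $\dot x(t)=u(t,x(t))$) yields the key identity
\[
\frac{d}{dt}\,w(t)~=~-w(t)^2+u(t,x(t))+[G*u(t)](x(t)),
\]
valid as long as the characteristic exists classically. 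Thus the whole theorem reduces to a comparison analysis for this Riccati equation, once the forcing term $F(t):=u(t,x(t))+[G*u(t)](x(t))$ is controlled.

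Next I would bound $F(t)$ in terms of the data. Using $\|G*u\|_{\mathbf L^\infty},\|G_x*u\|_{\mathbf L^\infty}\le\tfrac12\|u\|_{\mathbf L^1}$ together with the growth estimate $\|u(t)\|_{\mathbf L^1(\R)}\le e^t\|\bar u\|_{\mathbf L^1(\R)}$ from Theorem~\ref{thm1}, and integrating $\tfrac{d}{dt}u(t,x(t))=[G_x*u(t)](x(t))$ along the characteristic, I obtain
\[
\big|u(t,x(t))+[G*u(t)](x(t))\big|~\le~\big(|\bar u(\bar x)|+2\norm{\bar u}_{\mathbf L^1(\R)}\big)\,e^{t}~=:~Me^{t}.
\]
This is the step where the constants $|\bar u(\bar x)|$ and $2\norm{\bar u}_{\mathbf L^1(\R)}$ appearing in \eqref{lowb}--\eqref{upbound} are pinned down, and it is where I expect the bookkeeping to matter most.

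For part (i) I would compare $w$ from below. The bound $F\ge -Me^{t}$ gives $\dot w\ge -w^2-Me^{t}$, and the solution cannot escape to $+\infty$ because $\dot w\le -w^2+Me^t\le Me^t$. To rule out escape to $-\infty$ I introduce the reparametrised time $\tau=e^{t}-1$ and let $\mu$ solve the \emph{autonomous} Riccati equation $\tfrac{d\mu}{d\tau}=-\mu^2-M$ with $\mu(0)=\bar u'(\bar x)$. In the $\tau$-variable the comparison function $w$ satisfies $\tfrac{dw}{d\tau}\ge -\tfrac{w^2}{1+\tau}-M\ge -w^2-M$ (using $\tfrac{1}{1+\tau}\le1$ for $\tau\ge0$), so $w\ge\mu$ for as long as $\mu$ exists. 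Since $\mu(\tau)=\sqrt M\,\tan\!\big(\arctan(\bar u'(\bar x)/\sqrt M)-\sqrt M\,\tau\big)$ blows up only at $\tau=S:=\tfrac{1}{\sqrt M}\big(\tfrac\pi2+\arctan\tfrac{\bar u'(\bar x)}{\sqrt M}\big)$, the quantity $w$ stays finite for $\tau<S$, i.e.\ for $t<\ln(1+S)=T_*$, which is exactly \eqref{lowb}.

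For part (ii) I would instead compare from above in the strong-breaking regime. Here the forcing bound is used in the form $\dot w\le -w^2+M$, and completing the square with $b:=\sqrt{M+\tfrac14}$ gives $\dot w\le-(w+\tfrac12-b)(w+\tfrac12+b)$. Under hypothesis \eqref{condblowup} one has $w(0)=\bar u'(\bar x)<-\tfrac12-b$, so $w$ is strictly decreasing and stays below $-\tfrac12-b$; on this range $|w+\tfrac12-b|\ge|w+\tfrac12+b|$, whence $\dot w\le-(w+\tfrac12+b)^2$. Writing $\zeta=w+\tfrac12+b<0$ and integrating $\dot\zeta\le-\zeta^2$ forces $\zeta\to-\infty$ no later than $t=\tfrac{1}{-(\bar u'(\bar x)+\tfrac12+b)}=\tfrac{2}{|2\bar u'(\bar x)+1+2b|}=T^{*}$, giving \eqref{upbound}. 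The main obstacle, and the point I would treat most carefully, is reconciling the time growth $e^{t}$ of the $\mathbf L^1$-norm with the elementary closed-form breaking times: the lower comparison in (i) absorbs it cleanly through the change of variables $\tau=e^t-1$ and the inequality $\tfrac1{1+\tau}\le1$, while for (ii) one must ensure that on the (short) pre-breaking interval the \emph{constant} $M$ — rather than $Me^t$ — legitimately governs the upper bound on $F$, so that the purely reciprocal form of $T^{*}$ survives.
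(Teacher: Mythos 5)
Your derivation of the Riccati equation along characteristics and your treatment of part (i) are correct and essentially the same as the paper's: the paper compares $z(t)=u_x(t,x(t))$ with the subsolution of $s'=-s^2-Me^t$, where $M=|\bar u(\bar x)|+2\norm{\bar u}_{\mathbf{L}^1(\R)}$, and integrates $\arctan(s/\sqrt M)$ against $-e^t$ using $e^t\ge 1$; your reparametrization $\tau=e^t-1$ together with $\tfrac{1}{1+\tau}\le 1$ is the identical estimate in different coordinates, and both yield $T_*$. (Your explicit remark that $w$ cannot escape to $+\infty$ is a detail the paper leaves implicit.)

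Part (ii), however, has a genuine gap, and it is exactly the one you flag at the end without resolving: the inequality $\dot w\le -w^2+M$ with a \emph{constant} $M$ is not available; the forcing estimate only gives $\dot w\le -w^2+Me^t$. You cannot argue that $e^t\approx 1$ on the pre-breaking interval, because under \eqref{condblowup} the time $T^*=1/|m+\tfrac12+b|$ (with $m=\bar u'(\bar x)$, $b=\sqrt{M+\tfrac14}$) becomes arbitrarily large as \eqref{condblowup} approaches equality, so no fixed constant dominates $Me^t$ up to time $T^*$. There is also an internal sign that your starting inequality is not the right one: from a genuinely constant forcing one would complete the square as $-w^2+M=-(w-\sqrt M)(w+\sqrt M)$, with threshold $-\sqrt M$; your shift by $\tfrac12$ and the constant $\sqrt{M+\tfrac14}$ have to be smuggled in via $-w\ge 0$ rather than arising from the equation. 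The paper's resolution is the substitution $s_1(t)=s(t)e^{-t}$ applied to the supersolution of $s'=-s^2+Me^t$: then $s_1'=-s_1^2e^t-s_1+M\le -s_1^2-s_1+M$ (using $e^t\ge 1$), and it is precisely the linear term $-s_1$ created by this substitution that produces the threshold $-\tfrac12-\sqrt{M+\tfrac14}$ in \eqref{condblowup} and the constants in \eqref{upbound}. If you insert this one step --- work with $w_1=we^{-t}$, obtaining $\dot w_1\le -w_1^2-w_1+M=-(w_1+\tfrac12-b)(w_1+\tfrac12+b)$ --- the rest of your argument goes through verbatim and is in fact a slight improvement on the paper's endgame: your observation that $|w_1+\tfrac12-b|\ge|w_1+\tfrac12+b|$ on the relevant range reduces everything to $\dot\zeta\le-\zeta^2$ and gives $T^*$ directly, whereas the paper integrates $\ln\bigl(\tfrac{s_1+\frac12-b}{s_1+\frac12+b}\bigr)$ exactly and then needs $\ln(1+x)\le x$ to reach \eqref{upbound}.
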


\begin{proof}
Let $u(t,x)$ be the weak entropy solution of \eqref{BP} with initial data $u(0,x)=\bar u(x)\in\mathbf{C}^1(\R)\cap {\bf L}^1(\R) $. Given $\bar x\in\R$ denote by $x(t)$ the characteristic through $\bar{x}$ at time $t=0$, i.e., $x'(t)=u(t,x(t))$ and $x(0)=\bar x$. Furthermore let $m=\bar u'(\bar{x})$, then we are interested in finding an upper and a lower bound on how long it takes until wave breaking occurs. That is we are going to establish an upper and a lower bound on $t^*$ such that  
\[ 
u_x(t,x(t))\to -\infty \qquad t\uparrow t^*.
\]
Differentiating \eqref{BP} with respect to $x$ yields 
\[ 
(u_x)_t+u(u_x)_x~=~-(u_x)^2+[G*u]_{xx}~=~-(u_x)^2+[G*u]+u
\]
and hence 
\[ 
z(t):=u_x(t,x(t))
\]
satisfies 
\bel{along-char}
z'(t)~=~-z(t)^2+[G*u(t,\cdot)](x(t))+u(t,x(t)).
\eeq
Since $u(t,x(t))$ satisfies 
\[
\frac{d}{dt} u(t,x(t))~=~[G_x*u(t,\cdot)](x(t)),
\] 
we have, due to \eqref{L-1},
\[
\frac{d}{dt}\vert u(t,x(t))\vert ~\leq ~\norm{ u(t,.)}_{{\bf L}^1(\R)}~\leq~\norm{\bar u}_{{\bf L}^1(\R)}e^t \quad \text{for all } t\in[0,t^*).
\]
Thus 
\[ 
\vert u(t,x(t))\vert ~\leq ~ \vert \bar u(\bar x)\vert +\norm{\bar u}_{{\bf L}^1(\R)}e^t\leq \Big(\vert \bar u(\bar x)\vert +\norm{\bar u}_{{\bf L}^1(\R)}\Big)\cdot e^t, 
\] 
and 
\begin{multline*}
\vert [G*u(t,\cdot)](x(t))+u(t,x(t))\vert \\
\leq \norm{ u(t,\cdot)}_{{\bf L}^1(\R)}+\vert u(t,x(t))\vert ~\leq~ \Big(\vert \bar u(\bar x)\vert +2 \norm{\bar u}_{{\bf L}^1(\R)}\Big) \cdot e^t.
\end{multline*}
Recalling \eqref{along-char}, we get for all $t\in [0,t^*)$,
\[
-z(t)^2-\Big(\vert \bar u(\bar x)\vert +2\norm{\bar u}_{{\bf L}^1(\R)}\Big)\cdot e^t ~\leq ~ z'(t)~\leq~ -z(t)^2 +\Big(\vert \bar u(\bar x)\vert +2 \norm{\bar u}_{{\bf L}^1(\R)}\Big)\cdot e^t.
\]
To derive a lower bound on $t^*$ we look at the subsolution defined through 
\[
s'(t)~=~-s(t)^2-\Big(\vert \bar u(\bar x)\vert +2\norm{ \bar u}_{{\bf L}^1(\R)}\Big)\cdot e^t \quad,\quad s(0)~=~m.
\]
This implies that 
\[
\frac{1}{\sqrt{\vert \bar u(\bar x)\vert +2\norm{ \bar u}_{{\bf L}^1(\R)}}} \frac{\frac{s'(t)}{\sqrt{\vert \bar u(\bar x)\vert +2\norm{ \bar u}_{{\bf L}^1(\R)}}}}{\left(\frac{s(t)}{\sqrt{\vert \bar u(\bar x)\vert +2\norm{\bar u}_{{\bf L}^1(\R)}}}\right)^2+1}\geq -e^t \qquad \text{for all } t\in[0,t^*)
\]
or, equivalently, 
\[
\frac{1}{\sqrt{\vert \bar u(\bar x)\vert +2\norm{\bar u}_{{\bf L}^1(\R)}}}\left(\arctan\left(\frac{s(t)}{\sqrt{\vert \bar u(\bar x)\vert +2\norm{\bar u}_{{\bf L}^1(\R)}}}\right)\right)'\geq -e^t \]
for all $t\in [0,t^*)$. Thus a lower bound on $t^*$ is given by $T_l$, which is defined implicitly through
\[
\frac{1}{\sqrt{\vert \bar u(\bar x)\vert +2\norm{\bar u}_{{\bf L}^1(\R)}}} \left(-\frac{\pi}{2}-\arctan\left(\frac{m}{\sqrt{\vert \bar u(\bar x)\vert+2\norm{\bar u}_{{\bf L}^1(\R)}}}\right)\right)\geq 1-e^{T_l}.
\]
Hence
\[
T_l ~\geq~\ln\left(1+\frac{1}{\sqrt{|\bar u(\bar x)|+2\norm{\bar u}_{{\bf L}^1(\R)}}}\left( \frac{\pi}{2}+\arctan\left(\frac{\bar u'(\bar x)}{\sqrt{\vert \bar u(\bar x)\vert+2\norm{\bar u}_{{\bf L}^1(\R)}}}\right)\right)\right)\\
\]
which finally implies (\ref{lowb})\,.
\quad\\
\quad\\
\noindent
To derive an upper bound on $t^*$ we look at the supersolution defined through 
\[ 
s'(t)~=~-s(t)^2+\Big(\vert \bar u(\bar x)\vert +2\norm{ \bar u}_{{\bf L}^1(\R)}\Big)\cdot e^t \quad,\quad s(0)~=~m.
\]
Let $s_1(t)=s(t)e^{-t}$, then 
\begin{align*}
s_1'(t)&~=~s'(t)e^{-t}-s(t)e^{-t}\\
&~=~-s(t)^2e^{-t}-s(t)e^{-t}+\Big(\vert \bar u(\bar x)\vert +2\norm{ \bar u}_{{\bf L}^1(\R)}\Big)\\
&~=~ -s_1(t)^2e^t-s_1(t)+\Big(\vert \bar u(\bar x)\vert +2\norm{\bar u}_{{\bf L}^1(\R)}\Big)\\
&~\leq~ -s_1(t)^2-s_1(t)+\Big(\vert \bar u(\bar x)\vert +2\norm{\bar u}_{{\bf L}^1(\R)}\Big).
\end{align*}
Observe that $s_1'(t)$ is decreasing if 
\[ 
-s_1(t)^2-s_1(t)+\vert  \bar u(\bar x)\vert +2\norm{\bar u}_{{\bf L}^1(\R)}~=~ -\Big(s_1(t)+\frac12\Big)^2+\Big(\vert \bar u(\bar x)\vert +2\norm{\bar u}_{{\bf L}^1(\R)}+\frac14\Big)~<~ 0\,.
\]
Hence if we assume that $s(0)=s_1(0)< -\frac12 -\sqrt{\vert \bar u(\bar x)\vert +2\norm{\bar u}_{{\bf L}^1(\R)}+\frac14}$, the function $s(t)$ will be strictly decreasing on $[0, t^*)$
and
\[ 
\frac{1}{\sqrt{\vert \bar u(\bar x)\vert +2\norm{\bar u}_{{\bf L}^1(\R)}+\frac14}}\frac{\frac{s_1'(t)}{\sqrt{\vert \bar u(\bar x)\vert +2\norm{\bar u}_{{\bf L}^1(\R)}+\frac14}}}{\left(\frac{s_1(t)+\frac12}{\sqrt{\vert \bar u(\bar x)\vert +2\norm{\bar u}_{{\bf L}^1(\R)}+\frac14}}\right)^2-1}~\leq~ -1\,.
\]
Equivalently,
\[
\frac{1}{2\sqrt{\vert \bar u(\bar x)\vert +2\norm{\bar u}_{{\bf L}^1(\R)}+\frac14}} \left(\ln\left( \frac{s_1(t)+\frac12-\sqrt{\vert \bar u(\bar x)\vert +2\norm{\bar u}_{{\bf L}^1(\R)}+\frac14}}{s_1(t)+\frac12+\sqrt{\vert \bar u(\bar x)\vert +2\norm{\bar u}_{{\bf L}^1(\R)}+\frac14}}\right)\right)'\leq -1.
\]
Thus an upper bound on $t^*$ is given by $T_u$, which is defined through 
\begin{equation*}
T_u= \frac1{2\sqrt{\vert \bar u(\bar x)\vert +2\norm{\bar u}_{{\bf L}^1(\R)}+\frac14}} \ln\left(1-2\frac{\sqrt{\vert \bar u(\bar x)\vert +2\norm{\bar u}_{{\bf L}^1(\R)}+\frac14}}{m+\frac12+\sqrt{\vert \bar u(\bar x)\vert +2\norm{\bar u}_{{\bf L}^1(\R)}+\frac14}}\right)\,.
\end{equation*}
Recalling \eqref{condblowup}, we finally obtain 
\begin{equation*}
T_u~\leq~\frac{2}{\Big|2m+1+2\cdot \sqrt{\vert \bar u(\bar x)\vert +2\norm{\bar u}_{{\bf L}^1(\R)}+\frac14} \Big|}~=:~T^*\,.
\end{equation*}
\end{proof}

As an immediate consequence we obtain the following corollary.

\begin{corollary}
Let $u(t,x)$ be the weak entropy solution of \eqref{BP} with initial data $u(0,\cdot)=\bar u(\cdot)\in {\bf C}^1(\R)\cap {\bf L}^1(\R)$. In addition, let 
\[ 
m~=~\inf_{x\in\R} \bar u'(x)\qquad \text{and}\qquad  M~=~\norm{\bar u}_{{\bf L}^1(\R)}.
\]
Then the following statements hold.
\begin{itemize}
\item[(i)]
$u_x(t,x)$ remains bounded for all $t\in [0,T^-[$ where 
\[
T^-~=~\ln\left(1+\frac{1}{\sqrt{|m|+3M}}\cdot \left(\frac{\pi}{2}-\arctan\left(\frac{\vert m\vert}{\sqrt{2M}}\right)\right)\right)\,.
\]
\item[(ii)]
If 
\bel{Cond2}
m~<~-1-\sqrt{\frac{5}{2}M+1}~\leq~0\,,
\eeq
then $u_x(t,x)$ becomes unbounded within the time interval $[0, T^+[$ where 
\begin{equation}\label{Tupperbound}
 T^+~=~\frac{2}{\Big|2m+1+2\sqrt{|m|+\frac{5}{2}M+\frac{1}{4}}\Big|}\,.
\end{equation}
\end{itemize}
\end{corollary}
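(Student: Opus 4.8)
The plan is to read the corollary as the uniform-in-$\bar x$ version of Theorem~\ref{WB}: part~(i) comes from minimizing the pointwise lower bound \eqref{lowb} over all base points $\bar x$, and part~(ii) from evaluating the pointwise upper bound \eqref{upbound} at a base point where $\bar u'$ is smallest. Throughout one may assume $m\le 0$, since an $\mathbf{L}^1$ function with everywhere nonnegative derivative is identically zero and then no breaking occurs, making both claims vacuous.

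The estimate that feeds both parts is a uniform sup-norm bound on $\bar u$ in terms of $m$ and $M$. Since $\bar u'(x)\ge m$ everywhere, the function $-\bar u$ satisfies $(-\bar u)(x_2)-(-\bar u)(x_1)\le |m|\,(x_2-x_1)$ for $x_1<x_2$, so \eqref{L-I-A} with $K=|m|$ gives $\norm{\bar u}_{\mathbf{L}^\infty(\R)}\le\sqrt{2M|m|}$; in particular $|\bar u(\bar x)|\le\sqrt{2M|m|}$ for every $\bar x$. I would then record the two elementary rearrangements of the squares $(\sqrt{2M}-\sqrt{|m|})^2\ge 0$ and $(|m|-\tfrac M2)^2\ge 0$, namely $\sqrt{2M|m|}\le M+\tfrac{|m|}2$ and $\sqrt{2M|m|}\le |m|+\tfrac M2$, which yield the two working inequalities $|\bar u(\bar x)|+2M\le |m|+3M$ and $|\bar u(\bar x)|+2M\le |m|+\tfrac52 M$.

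For (i), fix $\bar x$ and set $a=|\bar u(\bar x)|$, $b=\bar u'(\bar x)\ge m$. The right-hand side of \eqref{lowb} is increasing in the quantity $\frac1{\sqrt{a+2M}}\big(\tfrac\pi2+\arctan\tfrac{b}{\sqrt{a+2M}}\big)$. Using $b\ge -|m|$ together with $\sqrt{a+2M}\ge\sqrt{2M}$, I bound the bracket from below by $\tfrac\pi2-\arctan\tfrac{|m|}{\sqrt{2M}}>0$, while $a+2M\le |m|+3M$ bounds the prefactor from below by $1/\sqrt{|m|+3M}$; the two estimates reinforce one another, giving $T_*(\bar x)\ge T^-$ for every $\bar x$. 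Since the characteristics through all base points foliate the strip up to the first breaking time, $u_x(t,\cdot)$ remains bounded on $\R$ for all $t\in[0,T^-[$.

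For (ii), I would take $\bar x$ with $\bar u'(\bar x)=m$ (a minimizing sequence if the infimum is not attained, the strict inequality in \eqref{Cond2} supplying the needed slack). Rewriting \eqref{Cond2} as $m^2-2|m|-\tfrac52 M>0$ and inserting $a+2M\le|m|+\tfrac52 M$, a short computation shows $m<-\tfrac12-\sqrt{a+2M+\tfrac14}$, which is exactly the breaking hypothesis \eqref{condblowup}; Theorem~\ref{WB}(ii) then forces breaking before the time \eqref{upbound}. Under \eqref{condblowup} the quantity $2m+1+2\sqrt{a+2M+\tfrac14}$ is negative, so \eqref{upbound} equals $2/(2|m|-1-2\sqrt{a+2M+\tfrac14})$, and the same bound $a+2M\le|m|+\tfrac52 M$ collapses it to $T^*(\bar x)\le T^+$, giving breaking within $[0,T^+[$. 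The main obstacle is bookkeeping rather than a new idea: one must apply \eqref{L-I-A} to $-\bar u$ (not $\bar u$) to obtain the correct constant, and then check that the monotonicity of $\arctan$, the lower bound $\sqrt{a+2M}\ge\sqrt{2M}$, and the two AM--GM refinements all point the same way, so that the $\bar x$-dependent constants collapse precisely onto $T^-$ and $T^+$; the only genuinely delicate point is the possible non-attainment of $\inf\bar u'$ in part~(ii), which the strictness of \eqref{Cond2} resolves.
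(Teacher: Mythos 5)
Your proposal is correct and follows essentially the same route as the paper's own proof: the same uniform bound $\norm{\bar u}_{\mathbf{L}^\infty(\R)}\le\sqrt{2|m|M}\le |m|+\tfrac{M}{2}$ (which the paper simply quotes as well known rather than deriving from \eqref{L-I-A}), the same arctan/prefactor monotonicity bookkeeping for part (i), and the same reduction of part (ii) to Theorem~\ref{WB} at a point where $\bar u'$ is (nearly) minimal. The single point of divergence is your closing remark on non-attainment of $\inf_x \bar u'(x)$: the strictness of \eqref{Cond2} does not by itself repair that case, since taking $\bar x$ with $\bar u'(\bar x)\le m+\epsi$ degrades the bound \eqref{upbound} to a quantity strictly larger than $T^+$ for every $\epsi>0$ (one would instead need an extra observation, e.g.\ that $\bar u\in\mathbf{C}^1(\R)\cap{\bf L}^1(\R)$ with $\bar u'\ge m$ forces $\bar u(x)\to 0$ as $|x|\to\infty$, so along a minimizing sequence $|\bar u(\bar x)|+2M$ tends to $2M<|m|+\tfrac52 M$, which restores the needed slack); however, the paper's own proof silently makes the same attainment assumption in its final inequality, so your write-up is at the same level of rigor as the published argument.
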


\begin{proof}
Given $\bar x\in\R$, denote by $x(t)$ the characteristic through $\bar x$, i.e. $x'(t)=u(t,x(t))$ and $x(0)=\bar x$. 
Theorem~\ref{WB} (i) implies that $u_x(t,x(t))$ is bounded for all $t\in[0, t_{\bar x}[$ with 
\begin{equation*}
t_{\bar x}~=~\ln\left(1+\frac{1}{\sqrt{|\bar u(\bar x)|+2\norm{\bar u}_{{\bf L}^1(\R)}}}\left( \frac{\pi}{2}+\arctan\left(\frac{\bar u'(\bar x)}{\sqrt{\vert \bar u(\bar x)\vert+2\norm{\bar u}_{{\bf L}^1(\R)}}}\right)\right)\right)\,.
\end{equation*}
In particular, we have 
\begin{equation*}
t_{\bar x}~\geq~ \ln\left(1+\frac{1}{\sqrt{\vert \bar u(\bar x)\vert +2M}}\left(\frac{\pi}{2}-\arctan\left(\frac{\vert m \vert}{\sqrt{\vert \bar u(\bar x)\vert +2M}}\right)\right)\right).
\end{equation*}
In addition, it is well-known that 
\begin{equation}\label{Linftybound}
\norm{\bar u}_{{\bf L}^\infty(\R)}~\leq~\sqrt{2\vert m\vert M}~\leq~\vert m\vert +\frac{M}{2},
\end{equation}
and hence
\begin{equation*}
t_{\bar x}\geq \ln\left(1+\frac{1}{\sqrt{\vert m\vert +3M}}\left(\frac{\pi}{2}-\arctan\left(\frac{\vert m\vert }{\sqrt{2M}}\right)\right)\right).
\end{equation*}
Since the right hand side is independent of $\bar x$, we have shown the first part.

As far as the second part is concerned, observe first of all that $m<0$, since by assumption $\bar u\in {\bf C}^1(\R)\cap {\bf L}^1(\R)$. Moreover, \eqref{Cond2} and \eqref{Linftybound} imply that for all $x\in \R$ 
\begin{equation*}
\frac{1}{2} +\sqrt{\vert \bar u(x)\vert +\norm{\bar u}_{{\bf L}^1(\R)}+\frac14 }~\leq~\frac{1}{2}+\sqrt{\vert m\vert +\frac52 M+\frac14}~<~\vert m\vert. 
\end{equation*}
Thus there exists $\bar{x}\in\R$ such that 
\begin{equation}\label{breakcond}
\bar u'(\bar{x})~<~-\frac{1}{2} -\sqrt{\vert \bar u(\bar{x})\vert +\norm{\bar u}_{{\bf L}^1(\R)}+\frac14}\quad\mathrm{and}\quad \bar u'(x)~\geq~m\,, 
\end{equation}
and hence according to the proof of Theorem~\ref{WB} wave breaking occurs.
Let $\bar x\in\R$ such that \eqref{breakcond} is satisfied, then an upper bound for the maximal time of existence is given by \eqref{upbound}
\begin{equation*}
T_{\bar x}~=~\frac{2}{\Big|2\bar u'(\bar{x})+1+2\cdot \sqrt{\vert \bar u(\bar x)\vert +2\norm{\bar u}_{{\bf L}^1(\R)}+\frac14} \Big|}.
\end{equation*}
Hence, by recalling (\ref{breakcond}), we obtain
\begin{equation*}
T_{\bar x}~\leq~\frac{2}{\Big|2m+1+2\sqrt{|m|+\frac{5}{2}M+\frac{1}{4}}\Big|}\,,
\end{equation*}
where the right hand side is independent of $\bar x$ and hence it yields \eqref{Tupperbound}.
\end{proof}

Finally we want to show that if the ${\bf L}^{\infty}$-norm of the derivative of the initial data $\bar u$ is small then the corresponding entropy solution of (\ref{BP}) will remain smooth for a long time.

\begin{theorem}
Let $u(t,x)$ be the weak entropy solution of \eqref{BP} with $u(0,\cdot)=\bar u(\cdot)\in \mathbf{C}^1(\R)\cap {\bf L}^1(\R)$ and let $m=\|\bar u'\|_{\bf{L}^{\infty}(R)}$\,.  Then $\norm{u_x(t,.)}_{{\bf L}^\infty(\R)}$ remains bounded for all $t\in[0,\ln (1+\frac{1}{m})[$.
\end{theorem}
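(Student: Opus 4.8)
The plan is to follow the weak entropy solution along characteristics exactly as in the proof of Theorem~\ref{WB}, but to estimate the nonlocal source term far more sharply. Writing $z(t)=u_x(t,x(t))$ for the characteristic $x(t)$ through an arbitrary base point, \eqref{along-char} gives
\[
z'(t)~=~-z(t)^2+\big[G*u(t,\cdot)+u(t,\cdot)\big](x(t)).
\]
In Theorem~\ref{WB} the bracket was bounded crudely by $(|\bar u(\bar x)|+2\norm{\bar u}_{{\bf L}^1(\R)})e^t$, which forces the ${\bf L}^1$-norm into the breaking time. The decisive observation here is that the bracket can instead be controlled by $\norm{u_x(t,\cdot)}_{{\bf L}^\infty(\R)}$ alone, with no dependence on $\norm{\bar u}_{{\bf L}^1(\R)}$.

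Thus the key estimate I would establish first is that, for every $v\in\mathbf{C}^1(\R)\cap{\bf L}^1(\R)$,
\[
\big|[G*v+v](x)\big|~\leq~\norm{v_x}_{{\bf L}^\infty(\R)}\qquad\text{for all }x\in\R.
\]
Since $\tfrac12\int e^{-|x-z|}\,dz=1$, I can subtract the constant $v(x)$ inside the convolution and write
\[
[G*v](x)+v(x)~=~-\frac12\int_{-\infty}^{\infty}e^{-|x-z|}\Big(\int_x^z v_\xi(\xi)\,d\xi\Big)\,dz.
\]
Bounding $|\int_x^z v_\xi\,d\xi|\leq\norm{v_x}_{{\bf L}^\infty(\R)}\,|z-x|$ and using $\tfrac12\int e^{-|w|}|w|\,dw=1$ then yields the claim. (Equivalently, this is the identity $G*v+v=(G*v)_{xx}$ combined with a mean-value bound.)

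Feeding this back into the characteristic equation gives, along every characteristic,
\[
\frac{d}{dt}\big|u_x(t,x(t))\big|~\leq~\big|u_x(t,x(t))\big|^2+\norm{u_x(t,\cdot)}_{{\bf L}^\infty(\R)}~\leq~P(t)^2+P(t),
\]
where $P(t):=\norm{u_x(t,\cdot)}_{{\bf L}^\infty(\R)}$; the first inequality follows by separating the cases $u_x>0$ and $u_x<0$ in \eqref{along-char} and using $|u_x(t,x(t))|\leq P(t)$. Taking the supremum over all base points, $P$ satisfies the differential inequality $D^+P(t)\leq P(t)^2+P(t)$ with initial value $P(0)=\norm{\bar u'}_{{\bf L}^\infty(\R)}=m$. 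Comparison with the scalar ODE $w'=w^2+w$, $w(0)=m$, whose explicit solution $w(t)=me^t/(m+1-me^t)$ remains finite precisely for $t<\ln(1+\tfrac1m)$, then gives $P(t)\leq w(t)<\infty$ on $[0,\ln(1+\tfrac1m))$, which is exactly the assertion.

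The main obstacle will be the analytic justification of this last passage, on two counts. First, the characteristic calculation is legitimate only while the solution is classical, so I would phrase the argument as a continuation (bootstrap) statement: by the local well-posedness theory of \cite{FC} there is a maximal time $\Tmax$ of $\mathbf{C}^1$-existence such that $\norm{u_x(t,\cdot)}_{{\bf L}^\infty(\R)}\to\infty$ as $t\uparrow\Tmax$ whenever $\Tmax<\infty$, and the estimate above forbids this before $\ln(1+\tfrac1m)$, forcing $\Tmax\geq\ln(1+\tfrac1m)$. Second, promoting the pointwise bound along individual characteristics to the differential inequality $D^+P\leq P^2+P$ for the supremum requires care: one works with the upper Dini derivative of $P$, using that on a short interval the supremum of $|u_x|$ is attained (or approached) along characteristics to which the pointwise inequality applies, so that the comparison principle with $w'=w^2+w$ transfers to $P$. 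Everything else is routine once the sharp estimate for $G*u+u$ is in place.
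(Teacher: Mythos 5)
Your proposal is correct and takes essentially the same route as the paper's proof: the paper likewise differentiates along characteristics, bounds the nonlocal term by $\norm{u_x(t,\cdot)}_{{\bf L}^\infty(\R)}$ (writing it as $[G_x*u_x(t,\cdot)](x(t))$ and using $\norm{G_x}_{{\bf L}^1(\R)}=1$, which is exactly your mean-value estimate for $G*u+u$ in disguise), and then integrates the Riccati inequality $Q'\leq Q^2+Q$ with $Q(0)=m$ to conclude boundedness on $[0,\ln(1+\frac{1}{m})[$. Your additional care with the Dini derivative of the supremum and the continuation (bootstrap) argument via local well-posedness makes rigorous what the paper leaves implicit, but it is the same proof.
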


\begin{proof} Given any $\bar{x}\in \R$, denote by $x(t)$ the characteristic through $\bar{x}$ at time $t=0$, i.e., $x'(t)=u(t,x(t))$ and $x(0)=\bar x$. Then the function $z(t)=u_x(t,x(t))$ satisfies the differential equation
\[
z'(t)~=~-z(t)^2+[G_x*u_x(t,\cdot)](x(t))\,.
\]
This implies that 
\[
{d\over dt}|z(t)|~\leq~z^2(t)+\|u_x(t,\cdot)\|_{{\bf L}^{\infty}(\R)}~\leq~\|u_x(t,\cdot)\|_{{\bf L}^{\infty}(\R)}^2+\|u_x(t,\cdot)\|_{{\bf L}^{\infty}(\R)}\,.
\]
Let $Q(t)~=~\|u_x(t,\cdot)\|_{{\bf L}^{\infty}(\R)}$, then we have 
\[
Q'(t)~\leq~Q^2(t)+Q(t)\,
\]
or equivalently,
\[
\frac{d}{dt}~\ln\Big(\frac{Q(t)}{Q(t)+1}\Big)~\leq~1\,.
\] 
Thus,
\[
\ln\Big(\frac{Q(t)}{Q(t)+1}\Big)+~\ln\Big(\frac{Q(0)+1}{Q(0)}\Big)~\leq~t\,,
\]
which implies that 
\[
\ln\Big(\frac{Q(t)}{Q(t)+1}\Big)~\leq~t-\ln\Big(1+\frac{1}{m}\Big)\,.
\]
Thus if $Q(t)$ becomes unbounded at time $t^*$, the left hand side tends to zero as $t\uparrow t^*$ and in particular,
$0\leq t^*-\ln\Big(1+\frac{1}{m}\Big)$.  Or, in other words, 
$Q(t)$ remains bounded for all $t\in \Big[0,\ln\Big(1+\frac{1}{m}\Big)\Big[$.
\end{proof}

\end{document}